\documentclass[11pt,a4paper]{article}

\usepackage[a4paper, total={5.8in, 9in}]{geometry}

\usepackage[english]{babel}
\usepackage{mdframed}

\usepackage[OT4]{fontenc}

\usepackage{bbm}
\usepackage{enumerate}
\usepackage{amssymb}
\usepackage{amsmath}
\usepackage{alltt}
\usepackage{amsthm}
\usepackage{amsfonts}
\usepackage{mathtools}
\usepackage{latexsym}
\usepackage{xspace}

\usepackage{amstext}

\usepackage[dvipsnames]{xcolor}

\usepackage{fullpage}
\usepackage{color}
\usepackage{graphicx}
\usepackage{graphics}
\usepackage{xspace}
\usepackage{url}

\usepackage{latexsym}
\usepackage{mathtools}

\usepackage{tikz}
\usetikzlibrary{fit}
\usepackage{thm-restate}
\usepackage{complexity}
\usepackage{array}

\usepackage{thmtools}

\usepackage{algorithm}
\usepackage{algpseudocode}
\algtext*{EndWhile}
\algtext*{EndIf}
\algtext*{EndFor}
\algrenewcommand\algorithmicrequire{\textbf{Input:}}
\algrenewcommand\algorithmicensure{\textbf{Output:}}

\usepackage[colorlinks=true, allcolors=blue]{hyperref}

\usepackage[noabbrev,capitalise,nameinlink]{cleveref}

\newcommand{\outdom}{\gamma^+}
\newcommand{\indom}{\gamma^-}

\newcommand{\SST}{\mathcal{S}}

\definecolor{bda}{HTML}{00B388}

\newcommand{\El}{{\cal E}(T)}
\newcommand{\zz}{z^*}
\newcommand{\chiv}{\vec{\chi}}

\newcommand{\pp}{p}

\newtheorem{theorem}{Theorem}
\newtheorem{lemma}{Lemma}
\newtheorem{conjecture}{Conjecture}
\newtheorem{definition}{Definition}
\newtheorem{claim}{Claim}
\newtheorem{observation}{Observation}
\newtheorem{corollary}{Corollary}

\newcommand{\eps}{\varepsilon}
\newcommand{\Ex}{\mathbb{E}}

\newenvironment{cproof}
{\begin{proof}
 [Proof.]
 \vspace{-1.5\parsep}
}
{ \end{proof}}

\begin{document}

\title{Fractional Dichromatic Number and Domination in Tournaments}

\author{Paul Colinot\thanks{G-SCOP, Universit\'e Grenoble Alpes, France. \tt{paul.colinot@grenoble-inp.fr}} \and Alantha Newman\thanks{LIP, CNRS, ENS de Lyon, France. \tt{alantha.newman@ens-lyon.fr}}}

\date{}

\maketitle

\begin{abstract}

\cite{bourneuf2025dense} showed that the domination number of a
tournament can be bounded as a function of its fractional dichromatic
number.  The function proved was exponential and the tools were based
on VC-dimension.  In this paper, we present two new proofs of this
theorem.

The first proof is based on a reduction to the problem of bounding the
domination number of a $(1/2-\eps)$-majority tournament, for which
\cite{bourneuf2025dense} and \cite{charikar2025approximately} gave
tight bounds.  This proof yields the same exponential bound on the
domination number as in \cite{bourneuf2025dense}.  The second proof
gives a quasilinear bound for the domination in terms of the
fractional dichromatic number.  It was obtained via AI and was
inspired by the recent book proof of the existence of a Condorcet
Winning Set of size five~\cite{prasanna2026blog}.

\end{abstract}

\section{Introduction}

We use $T$ to denote a tournament with vertex set $V(T)$ and arc set
$A(T)$.  For a subset of vertices $S \subset V(T)$, we use $T[S]$ to
denote the subtournament of $T$ induced on vertex set $S$.  The {\em
  dichromatic} number of a tournament $T$, denoted $\chiv(T)$, is the
minimum integer $k$ such that the vertex set of $T$ can be partitioned
into $k$ transitive subtournaments.  We use $\outdom(T)$ to denote the
size of the minimum (out) dominating set in $T$, where $S \subset V(T)$ is a
dominating set of $T$ if every vertex in $T$ either belongs to $S$ or
has at least one in-neighbor in $S$.  For $S \subset V(T)$, we use $T[S]$ to denote the subtournament induced on vertex set $S$.

An interesting local-to-global question, posed as Conjecture 2.6 in
\cite{berger2013tournaments}, is: if the out-neighborhood of each
vertex in a tournament has bounded dichromatic number, then does the
whole tournament have bounded dichromatic number?  (Observe that such
a statement does not hold for graphs and chromatic number, since a
triangle free graph can have unbounded chromatic
number~\cite{mycielski1955coloriage, erdos1959graph,kim1995ramsey},
and in such a graph, the neighborhood of each vertex forms an
independent set.) \cite{harutyunyan2019coloring} answered this
question in the affirmative.

Specifically, we say a tournament is {\em $t$-bounded} if for every
vertex $v$, the subtournament of $T$ induced by the set of
out-neighbors of $v$ has dichromatic number at most $t$.  The
following theorem states that the dichromatic number of a $t$-bounded
tournament is bounded as a function of $t$.  The function itself is left unstated, but is clearly exponential.  
\begin{theorem}[\cite{harutyunyan2019coloring}]\label{thm:localtoglobal}
There is a function $f$ such that every $t$-bounded tournament $T$
satisfies $\chiv(T) \leq f(t)$.
  \end{theorem}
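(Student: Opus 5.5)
The plan follows the standard two-step route. First bound the domination number of a $t$-bounded tournament, then turn a small dominating set into a bounded colouring by recursing on neighbourhoods.

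\textbf{Step 1 (colouring from a dominating set).} Suppose $D$ dominates $T$. Every vertex outside $D$ lies in $N^+(v)$ for some $v\in D$. Each $T[N^+(v)]$ has dichromatic number at most $t$, and $D$ itself can be split into $|D|$ singletons, which are transitive. Since a union of colourings of a cover gives a colouring, we get
\[\chiv(T)\le |D|\,(t+1).\]
So it suffices to show that $\outdom(T)$ is bounded by a function of $t$ alone.

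\textbf{Step 2 (bounding domination).} I would argue by strong induction on $t$. For $t=0$ every out-neighbourhood is empty, so $T$ has at most one vertex. In general, choose a transitive subtournament $X_v$ of $N^+(v)$ that is the largest colour class of an optimal colouring of $N^+(v)$. Removing it leaves a set of dichromatic number at most $t-1$, so we may apply induction to it.

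To get a global bound I would use a fractional or LP duality argument. Consider the minimum fractional dominating set of $T$ and its dual, a probability distribution $w$ on $V(T)$ that places weight less than $1$ on every closed in-neighbourhood. If the fractional domination number is large, then every vertex $u$ has $w(N^-[u])$ small, so $w(N^+(u))$ is close to $1$.

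The aim is to show that some transitive set $X\subseteq N^+(v)$ carries weight at least $1/(t+1)$ of the distribution up to small loss. Let $x$ be the source of $X$. Then $N^+(x)\supseteq X\setminus\{x\}$, so $w(N^+(x))$ is large. The real issue is to derive a contradiction by building a distribution that forces one vertex's out-neighbourhood to have many disjoint heavy pieces. Concretely, I would sample vertices $v_1,\dots,v_m$ from $w$. With positive probability their common out-neighbourhood $N^+(v_1)\cap\dots\cap N^+(v_m)$ still has noticeable weight, because each vertex is dominated by little weight. Iterating inside $N^+(v_1)$ then lowers the boundedness parameter by one colour class at a time.

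Once the fractional domination number is bounded by some $g(t)$, a bounded integral dominating set follows. I would get this from the sampling argument (an $\eps$-net style bound), or more simply by induction on $t$ within out-neighbourhoods: dominate $T$ minus $N^+(v)$ and recurse.

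The main obstacle is exactly Step 2. A tournament with bounded out-neighbourhood dichromatic number need not look locally transitive in any obvious way. One has to find a vertex whose out-neighbourhood contains almost every vertex in a weighted sense and then recurse with the parameter decreasing. I would first try the recursion
\[\outdom(T)\le 1+\outdom\bigl(T[V\setminus N^+[v]]\bigr).\]
This is combined with choosing $v$ via the dual distribution so that $V\setminus N^+[v]$ has small weight, and with the observation that $T[N^-(v)]$ inherits $t$-boundedness. If the weight argument fails to give progress, I would fall back on the known approach of Harutyunyan et al.\ via heroes and substructures.
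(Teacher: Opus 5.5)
\textbf{There is a genuine gap in Step 2.} Your Step 1 is fine; it is the paper's observation $f(t)=t\cdot g(t)$. In fact $t|D|$ colours suffice, since $v$ can be added as a source to a colour class of $N^+(v)$. Step 2, however, is the entire content of the theorem, and it is not actually carried out. The ingredients you list either point the wrong way or make no progress.

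\emph{The duality idea cannot work as stated.} The dual of fractional domination is a distribution $w$ with $w(N^+[v])\le 1/\gamma_f$ for every $v$. These are closed \emph{out}-neighbourhoods, not in-neighbourhoods. Moreover every tournament has fractional domination number at most $2$; this is the fact the paper uses for Observation~\ref{obs:fractBounded}. So ``if the fractional domination number is large'' never occurs. Your intermediate claim, that every $u$ has $w(N^+(u))$ close to $1$, is impossible for any distribution, because $\sum_u w(u)\,w(N^+(u))=\frac12\bigl(1-\sum_u w(u)^2\bigr)<\frac12$.

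\emph{The rounding step is exactly the hard part.} You pass from bounded fractional domination to bounded domination ``by an $\eps$-net argument''. But there are tournaments with arbitrarily large $\outdom$, and all of them have fractional domination at most $2$. So some structural consequence of $t$-boundedness, such as bounded VC-dimension, must be proved first, and you never identify one.

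\emph{The remaining pieces do not supply that consequence.}
\begin{itemize}
\item A set $D$ fails to dominate $u\notin D$ exactly when $u$ is a common \emph{in}-neighbour of $D$. Common out-neighbourhoods of sampled vertices are therefore irrelevant.
\item The recursion $\outdom(T)\le 1+\outdom(T[N^-(v)])$ decreases no parameter, since $T[N^-(v)]$ is again merely $t$-bounded. It only yields bounds depending on $n$.
\item Nothing makes the boundedness parameter drop inside $N^+(v_1)$.
\item Induction on $N^+(v)\setminus X_v$ tells you only about the inside of $N^+(v)$, which you can already colour. It says nothing about dominating $N^-(v)$.
\end{itemize}
Falling back on Harutyunyan et al.\ amounts to citing the statement itself.

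\textbf{What the paper does instead.} It supplies the missing ideas in two moves.

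First, it uses fractional domination $\le 2$ in the right direction, to bound the fractional \emph{dichromatic} number. Take $x\ge 0$ with $\sum_v x_v\le 2$ and $x(N^-[u])\ge 1$ for every $u$. Cover each $N^+[v]$ by $t$ transitive sets, adding $v$ as a source to one colour class, and give each of these sets weight $x_v$. This shows $\chiv_f(T)\le 2t$.

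Second, it converts bounded $\chiv_f$ into bounded domination by a Brouwer fixed-point argument (Theorem~\ref{thm:quasilinear}).
\begin{itemize}
\item Suppose every $k$-set has a common in-neighbour. Fix one such in-neighbour $h(x_1,\ldots,x_k)$ for each $k$-tuple.
\item Take a fixed point $p$ of the map sending $p$ to the law of $h(X_1,\ldots,X_k)$, where the $X_i$ are i.i.d.\ with law $p$.
\item Averaging against an optimal fractional cover gives a transitive set $S$ with $p(S)=a\ge 1/c$.
\item Order $S$ transitively. Let $B$ be the shortest prefix with $p(B)\ge a/2$, and let $C$ be the suffix starting at the last vertex of $B$; then $p(C)>a/2$.
\item The vertex $Y=h(X_1,\ldots,X_k)$ has law $p$ and beats every $X_i$, so $Y\in C$ forces all $X_i\notin B$.
\item Hence $a/2<(1-a/2)^k$, which gives $k<2c\ln(2c)$.
\end{itemize}
With $c=2t$ and your Step 1, this gives $\chiv(T)=O(t^2\log t)$.
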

In fact, the original proof of \cite{harutyunyan2019coloring} showed, somewhat indirectly,
that the domination number of a $t$-bounded tournament is bounded,
which yields the proof of Theorem \ref{thm:localtoglobal}.  We state
this formally.
\begin{observation}
If there is a function $g$ such that every $t$-bounded tournament $T$
satisfies $\outdom(T) \leq g(t)$, then $f(t) = t \cdot g(t)$.
\end{observation}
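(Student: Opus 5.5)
The plan is to take a minimum dominating set $S$ of $T$, with $|S| = \outdom(T) \leq g(t)$, and use it to cover $V(T)$ by at most $|S|+1$ sets whose induced subtournaments each have small dichromatic number.

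\emph{Covering step.} By the definition of domination, every vertex of $V(T)\setminus S$ has an in-neighbor in $S$. Hence
\[
V(T) = S \cup \bigcup_{s \in S} N^+(s),
\]
where $N^+(s)$ denotes the out-neighborhood of $s$.

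\emph{Colouring each out-neighborhood.} Since $T$ is $t$-bounded, each $T[N^+(s)]$ has dichromatic number at most $t$, so $N^+(s)$ can be partitioned into at most $t$ transitive sets. To turn the cover into a partition, I assign each vertex $v \notin S$ to one chosen $s\in S$ with $s\to v$. The set of vertices assigned to $s$ is a subset of $N^+(s)$. Because transitivity is inherited by induced subtournaments, restricting the $t$ transitive classes of $N^+(s)$ to this subset still gives transitive classes. Using disjoint palettes for different $s$, the vertices of $V(T)\setminus S$ are coloured with at most $t\,|S|$ transitive classes.

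\emph{Handling $S$ itself.} The vertices of $S$ still need colours. One option is to colour each vertex of $S$ as a singleton, which is trivially transitive. This gives $\chiv(T) \le t\,|S| + |S| = (t+1)\,g(t)$.

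The only real issue is matching the exact constant $t\cdot g(t)$, and I would try two ways of absorbing $S$.
\begin{itemize}
\item Add each $s$ as a new vertex to one of the $t$ classes of its own out-neighborhood. A transitive tournament plus a vertex that dominates all of it is still transitive, so this is safe provided that, after the reassignment, the chosen class still consists only of out-neighbors of $s$.
\item Alternatively, note that $S$ contains a vertex $v$ of maximum out-degree. If $S$ is minimum and also chosen greedily, then $S\setminus N^+(\cdot)$ structure might allow $S$ to be folded into existing classes.
\end{itemize}
The cleanest version is the first one. Each $s$ is placed in the first colour class of its own palette, and the assignment map is defined so that the vertices assigned to $s$ lie in $N^+(s)$. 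The class $\{s\}\cup C$ with $C \subseteq N^+(s)$ transitive is then transitive, since $s$ is a source for it. Different palettes are disjoint, so no interaction between different $s$ arises. This yields exactly $t\cdot|S| \le t\cdot g(t)$ transitive classes, so $f(t)=t\cdot g(t)$ works.

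The step I expect to be the main (minor) obstacle is this absorption of $S$ into the palettes without exceeding $t$ colours per dominator. The source argument settles it: prepending a source to a transitive tournament keeps it transitive. If $S$ is a vertex whose assigned set is empty, it simply forms its own class, still within its budget of $t\ge 1$ colours.
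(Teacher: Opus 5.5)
Your argument is correct and is the standard reasoning behind this observation; the paper states it without proof. You split $V(T)\setminus S$ among the out-neighbourhoods $N^+(s)$ of a minimum dominating set $S$, colour each part with the $t$ transitive classes of $T[N^+(s)]$, and absorb each $s$ into one of its own classes as a source, which gives $\chiv(T)\le t\cdot\outdom(T)\le t\cdot g(t)$ for $t\ge 1$. The second bullet, claiming $S$ contains a vertex of maximum out-degree, is unjustified and unused, so you should delete it.
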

It has also been observed that if $T$ is $t$-bounded, then
$\chiv_f(T)$ is bounded as a function of $t$, where $\chiv_f(T)$
denotes the fractional dichromatic number, which we will define
shortly.  This follows from the fact that the fractional domination of
a tournament is at most 2.
\begin{observation}\label{obs:fractBounded}
Every $t$-bounded tournament $T$ satisfies $\chiv_f(T) \leq 2t$.
\end{observation}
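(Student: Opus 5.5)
Fractional dichromatic number $\chiv_f(T)$ is the minimum total weight $\sum_X y_X$ over transitive subsets $X\subseteq V(T)$, with $y\ge 0$, such that every vertex $v$ is covered with total weight at least $1$, i.e.\ $\sum_{X\ni v} y_X \ge 1$. The plan is to build such a fractional cover from a fractional dominating set together with the local colorings of the out-neighborhoods.

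\textbf{Step 1: fractional domination at most $2$.} A fractional dominating set is a weighting $w\ge 0$ on $V(T)$ such that for every vertex $v$,
\[ w(v) + \sum_{u\to v} w(u) \ge 1 . \]
By LP duality (von Neumann's minimax theorem for the skew-symmetric game of $T$), there is a probability distribution $p$ on $V(T)$ such that for every vertex $v$,
\[ p(v) + \sum_{u \to v} p(u) \;\ge\; \sum_{u \leftarrow v} p(u). \]
This is the statement that the value of the symmetric tournament game is $0$. Since $p$ is a probability distribution, the two sides sum to $1$, so the left-hand side is at least $1/2$. Setting $w = 2p$ therefore gives a fractional dominating set of total weight $2$. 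This duality step is the only nontrivial point; it is standard, and I would cite or briefly derive it.

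\textbf{Step 2: build the cover.} For each vertex $u$, fix a partition of $N^+(u)$ into at most $t$ transitive sets $X_{u,1},\dots,X_{u,t}$, which exists by $t$-boundedness. Also add the singleton $\{u\}$, which is transitive. Assign weight $w(u)$ to each of $\{u\}, X_{u,1},\dots,X_{u,t}$. A vertex $v$ then receives weight $w(v)$ from its own singleton and weight $w(u)$ from the part of $N^+(u)$ containing it, for every in-neighbor $u$ of $v$. Its total coverage is therefore at least $w(v)+\sum_{u\to v}w(u)\ge 1$. The total weight is
\[ \sum_u (t+1)\,w(u) = 2(t+1). \]

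\textbf{Step 3: remove the $+1$.} To reach $2t$ exactly, merge $\{u\}$ into one of the classes $X_{u,i}$. Since $u$ dominates all of $N^+(u)$, the set $X_{u,1}\cup\{u\}$ is still transitive, as $u$ becomes a source. So each $u$ contributes only $t$ sets of weight $w(u)$, for a total of $2t$. (If $N^+(u)$ is empty, $\{u\}$ alone is one set and $t\ge 1$ handles it.) This gives $\chiv_f(T)\le 2t$.

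The main obstacle is Step 1, the fractional domination bound. The rest is a direct construction.
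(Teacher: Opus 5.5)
Your proof is correct and follows the route the paper indicates: the paper justifies this observation only by the one-line remark that the fractional domination number of a tournament is at most $2$, and your minimax derivation, followed by weighting each vertex's out-neighborhood coloring by $w(u)=2p(u)$, is exactly that argument written out. Your Step 3 (adding $u$ as a source to one class of $N^+(u)$) is the detail that gives $2t$ rather than $2t+2$, and the paper leaves it implicit.
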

Recently, \cite{bourneuf2025dense} proved that bounded fractional dichromatic number implies bounded domination.
\begin{theorem}[\cite{bourneuf2025dense}]\label{thm:main}
There is a function $h$ such that if tournament $T$ satisfies $\chiv_f(T) \leq c$, then $\outdom(T) \leq h(c)$.
\end{theorem}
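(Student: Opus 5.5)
The plan is to prove Theorem~\ref{thm:main} by LP duality followed by sampling. I would turn the fractional colouring into a family of ``voters'' (linear orders of $V(T)$) that approximately agree with $T$, and then invoke the known bound on the domination number of $(1/2-\eps)$-majority tournaments from \cite{bourneuf2025dense,charikar2025approximately}, with $\eps$ depending only on $c$.

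First I fix the fractional colouring. By definition of $\chiv_f(T)\le c$ there are weights $y_A\ge 0$ on transitive subsets $A\subseteq V(T)$ with $\sum_A y_A\le c$ and $\sum_{A\ni v}y_A\ge 1$ for every $v$. Normalising gives a probability distribution $\mathcal{A}$ on transitive sets such that every vertex lies in the random set $A\sim\mathcal{A}$ with probability at least $1/c$. Each such $A$ carries a canonical linear order that agrees with every arc of $T[A]$.

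Next I build the voters. The naive voter ranks $A$ in its transitive order above the rest of $V(T)$. Such a voter never contradicts an arc inside $A$, but it decides arcs between $A$ and $V\setminus A$ arbitrarily. To correct this, I would mix it with the minimax strategy of the skew-symmetric game on $T$, which is the source of ``fractional domination at most $2$'': a distribution $p$ on $V(T)$ such that every $u$ has $\Pr_{v\sim p}[v=u \text{ or } v\to u]\ge 1/2$. The aim is a distribution over linear orders $L$ satisfying, for every arc $u\to v$,
\[
\Pr_L[u \text{ precedes } v \text{ in } L] \;\ge\; \tfrac12-\eps(c).
\]
For example, pair two independent samples $A,A'$ and order $A\cup A'$ using $T$, which is possible when $T[A\cup A']$ is acyclic, and otherwise symmetrically at random. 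Symmetrisation keeps the disagreement probability of each arc at most $1/2$. The positive probability, at least about $1/c^2$, that both endpoints land in a common transitive set should then push the agreement strictly above $1/2$ by a margin polynomial in $1/c$.

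Finally I conclude. Once the voter distribution exists, Chernoff bounds let me replace it by a finite multiset of voters while losing only an extra $\eps/2$, without any dependence on $n$, since only a bounded domination statement is needed and a minimax argument on weighted vertex sets can be used instead if required. This exhibits $T$ as a $(1/2-\eps)$-majority tournament for some $\eps=\eps(c)>0$, and the cited bounds give $\outdom(T)\le h(c)$.

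The main obstacle is the middle step: turning ``each vertex is covered by a transitive set with probability $\ge 1/c$'' into ``every arc is respected by a $\ge 1/2-\eps$ fraction of voters''. Arcs between different colour classes receive no direct information from the colouring. If pairing classes fails, I would fall back on a direct minimax formulation. One would show that for every probability weighting $w$ on $V(T)$ there is a set $S$ of bounded size, consisting of the source of a heavy transitive set together with a few samples from $p$, whose closed out-neighbourhood has $w$-weight at least $1/2+\eps(c)$. Then I would apply the majority-tournament domination argument to that ``weighted majority'' directly.
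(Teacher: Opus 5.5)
There is a genuine gap, exactly at the step you call the main obstacle, and no construction of voters can close it. First, your displayed target points the wrong way. Requiring $\Pr_L[u\prec_L v]\ge\frac12-\eps$ for every arc $u\to v$ only gives $T\subseteq T_M$, which a uniformly random order already achieves with $\eps=0$. A dominating set of $T_M$ may then dominate along arcs that are not in $T$. A direct reduction to Theorem~\ref{thm:unweighted-election} needs $T_M\subseteq T$, i.e.\ agreement at least $\frac12+\eps(c)$ on \emph{every} arc of $T$.

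That stronger statement is false. Let $V_1=\{a_1,\dots,a_N\}$ and $V_2=\{b_S : S\subseteq[N]\}$ each induce a transitive tournament, with $a_i\to b_S$ iff $i\in S$; then $\chiv_f(T)\le\chiv(T)\le 2$. Suppose a distribution over linear orders gave every arc agreement at least $\frac12+\eps$. By Hoeffding, some sample of $k=O(\eps^{-2}\log N)$ voters would, for more than half of all $S$, recover $S$ as the set of $a_i$ ranked before $b_S$ by a majority of the sample. That set is determined by the $k$ prefixes $\{a\in V_1: a\prec_{L_j}b_S\}$, each of which has only $N+1$ possibilities. Hence $2^{N-1}<(N+1)^k$, so $N=O(\eps^{-2}\log^2(1/\eps))$, a contradiction for large $N$.

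In this example every arc $a_ib_S$ has coverage $0$ under the colouring $\{V_1,V_2\}$, and $T[V_1\cup V_2]$ contains directed triangles. So the ``probability about $1/c^2$ that both endpoints land in a common transitive set'' is simply not available for arcs between colour classes. Your fallback does not help either. In \emph{every} tournament and for every weighting $w$, two greedily chosen vertices (each covering half of the remaining weight) have closed out-neighbourhood of $w$-weight at least $3/4$. So that property carries no information about $\chiv_f$ and cannot bound domination, as random tournaments show.

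The missing idea is to give up on low-coverage arcs and recurse on the fractional dichromatic number. The paper's election, built from cluster-based rounding of the LP, only guarantees agreement $\frac1{2-y_{ab}}-\delta$. Consequently an arc $u\to x$ of $T$ can appear reversed (red) in $T_M$ only when $y_{ux}\le\eps'=\frac1{2c-1}$. A $(\frac12-\eps)$-dominating set $X$ of size $O(c^2)$ then dominates, in $T$, every vertex except those in the sets $U_x$ that $x$ reaches through red arcs. Restricting $\zz$ to transitive sets avoiding $x$ and rescaling by $\frac1{1-\eps'}$ shows $\chiv_f(T[U_x])\le c-\frac12$, and induction gives $h(c)=O(c^2)\,(1+h(c-\frac12))=2^{O(c\log c)}$. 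The paper's second proof bypasses elections entirely. It takes a Brouwer fixed point $p$ of the map sending a distribution to the law of a chosen common in-neighbour of $k$ independent samples, and splits a transitive set of $p$-mass at least $1/c$ at its $p$-median. This yields $\outdom(T)\le\lceil 2c\ln(2c)\rceil$.
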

Their proof used VC-dimension-based tools, which they also applied to bound the size of a dominating set for the $(1/2 - \eps)$-majority tournament of an election.  An {\em election} consists of $m$ rankings on a set of $n$ candidates. To construct the $(1/2-\eps)$-majority tournament, we make a vertex for each candidate and
put an arc from $u$ to $v$ if at least $1/2 - \eps$ of the voters rank $u$ before $v$.  Notice that some pairs may have arcs in both directions.  The next theorem was also proved by 
\cite{charikar2025approximately} using tools from probability theory.
The dependence on $\eps$ in the bound is tight~\cite{charikar2026pessimal}.
\begin{theorem}[\cite{bourneuf2025dense,charikar2025approximately}]\label{thm:unweighted-election}
Every election has a $(\frac{1}{2}-\eps)$-dominating set $S$ of size $O(\frac{1}{\eps^2})$.
  \end{theorem}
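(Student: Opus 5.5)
The plan is to combine a minimax (maximal lottery) argument with random sampling. First I reduce to the existence of a good probability distribution on candidates. Then I sample a set of $O(1/\eps^2)$ candidates from it and prove that, with positive probability, the sample dominates every candidate. The crux is to do this without paying a union bound over all $n$ candidates.

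\textbf{Step 1 (maximal lottery).} For candidates $u,v$ let $M(u,v)$ be the fraction of voters ranking $u$ before $v$ (with $M(v,v)=1/2$). The zero-sum game with payoff $M(u,v)-\tfrac12$ is skew-symmetric, so its value is $0$. By von Neumann's minimax theorem there is a distribution $p$ on candidates with
\[
\sum_u p(u)\,M(u,v)\ \ge\ \tfrac12 \qquad \text{for every candidate } v .
\]

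\textbf{Step 2 (per-voter prefix reformulation).} For a voter $i$ and a candidate $v$, the set of candidates ranked before $v$ by voter $i$ is a prefix $P_i(v)$ of ranking $i$. Write $t_i(v)=p(P_i(v))$ for its $p$-mass. Step 1 says exactly that $\frac1m\sum_i t_i(v)\ge \tfrac12$. Also, $u$ $(\tfrac12-\eps)$-dominates $v$ if and only if $u$ lies in at least a $(\tfrac12-\eps)$-fraction of the prefixes $P_i(v)$. Hence whether a sample $S$ dominates $v$ depends on $v$ only through the family of prefixes $(P_i(v))_i$. Moreover this is monotone: enlarging prefixes only helps.

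\textbf{Step 3 (one candidate).} Fix $v$ and draw $u\sim p$. The random variable $X=M(u,v)\in[0,1]$ has $\Ex X\ge \tfrac12$. So $\Pr[X\ge \tfrac12-\eps]\ge \eps/(\tfrac12+\eps)\ge\eps$. With $k$ independent samples, the failure probability for $v$ is at most $(1-\eps)^k$.

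\textbf{Step 4 (uniformity over $v$, the main obstacle).} A naive union bound over the $n$ candidates gives a bound depending on $n$, so the real work is here. Using Step 2, I would replace each $v$ by a discretized ``threshold profile'': round each $t_i(v)$ down to a multiple of $\eps/4$, so each prefix shrinks to the prefix of that mass. By monotonicity, it suffices to dominate these shrunken prefix families. I would then first subsample $O(\log(1/\eps)/\eps^2)$ voters, using Hoeffding plus the minimax structure, so that all relevant averages are preserved up to $\eps/4$ for the finitely many profile types.

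The direct count of profile types, roughly $(1/\eps)^{m'}$ where $m'$ is the number of sampled voters, is still too large for the target $O(1/\eps^2)$. So the step I would try first to circumvent this is a chaining/greedy argument. Process the sample in $O(\log(1/\eps))$ rounds. In each round, use Step 3 in expectation, averaged over $v$ drawn from $p$ itself or from the current ``undominated'' mass, to shrink the set of undominated profiles by a constant factor, as in a multiplicative-weights or boosting argument. If this yields only $O(\log(1/\eps)/\eps^2)$, I would remove the log by weighting rounds geometrically, as in Charikar et al.

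I expect this uniformization step to be the only genuinely delicate part. Steps 1--3 are routine.
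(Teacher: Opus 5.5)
The paper does not prove this theorem; it cites it from \cite{bourneuf2025dense,charikar2025approximately}, so your proposal has to stand on its own. Steps~1--3 are fine, with one small repair. With $P_i(v)$ the candidates strictly before $v$ and $M(v,v)=\frac12$, Step~1 only gives $\frac1m\sum_i t_i(v)\ge\frac12-\frac{p(v)}{2}$, so use the weak prefixes ``at or before $v$'' instead.

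Step~4, which you correctly call the entire difficulty, is a list of hopes rather than an argument, and the devices you name do not work as stated.
\begin{itemize}
\item Hoeffding on a voter subsample controls $\frac1m\sum_i t_i(v)$ only for one fixed $v$. Uniformity in $v$ again needs a union bound over candidates. Your ``finitely many profile types'' are finitely many only after the voters have been reduced, which is circular.
\item Averaging Step~3 over $v\sim p$ bounds only the $p$-mass of undominated candidates. It says nothing about candidates of negligible $p$-mass, which may be almost all of them.
\item Reweighting toward the undominated candidates is greedy set cover. Each pick removes an $\eps$-fraction of the remaining weight, so clearing $N$ types takes $O(\log N/\eps)$ picks. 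Shrinking by a constant factor per round needs $\log N$ rounds, not $O(\log(1/\eps))$. With $N=(4/\eps)^{m'}$ and $m'=\Theta(\log(1/\eps)/\eps^2)$, this gives roughly $\eps^{-3}$ up to logarithms, even if the voter reduction were justified.
\end{itemize}
``Remove the log by weighting rounds geometrically'' is a pointer, not a step.

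The missing idea is to change what the sample is asked to do. Do not ask a single sampled $u$ to hit each $v$: that succeeds with probability only $\eps$ and forces union bounds. Ask instead that the empirical distribution $\hat q$ of $k$ i.i.d.\ draws from $p$ be an approximate maximal lottery for all $v$ simultaneously.

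For a fixed voter $i$, the sets $P_i(v)$ are prefixes of a single ranking. So $\sup_v|\hat q(P_i(v))-p(P_i(v))|$ is a Kolmogorov--Smirnov distance between an empirical and a true distribution function. The Dvoretzky--Kiefer--Wolfowitz inequality gives $\Ex\sup_v|\hat q(P_i(v))-p(P_i(v))|\le C/\sqrt{k}$ for an absolute constant $C$, independently of $n$. By linearity of expectation over voters, with no union bound over voters or candidates, some sample satisfies
\[
\frac1m\sum_{i=1}^m \sup_v\bigl|\hat q(P_i(v))-p(P_i(v))\bigr|\le \frac{C}{\sqrt{k}}\le\eps \qquad\text{for } k=\lceil C^2/\eps^2\rceil .
\]
Therefore $\frac1m\sum_i\hat q(P_i(v))\ge\frac12-\eps$ for every $v$, using weak prefixes. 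Let $S:=\mathrm{supp}(\hat q)$. If $v\notin S$, the left side equals $\sum_{u\in S}\hat q(u)M(u,v)$, so some $u\in S$ has $M(u,v)\ge\frac12-\eps$. Thus $S$ is a $(\frac12-\eps)$-dominating set with $|S|\le k=O(1/\eps^2)$.

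This matches the approach announced in \cite{charikar2025approximately}, which approximates a maximal lottery by one of constant support. Your Step~2 already contains both ingredients: the prefix structure and $\frac1m\sum_i t_i(v)\ge\frac12$. They must be used to approximate these averages uniformly per voter, not to hit each $v$ individually.
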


In this paper, we give two alternate proofs of Theorem \ref{thm:main}.  The first proof, presented in Section \ref{sec:woVC}, reduces Theorem \ref{thm:main} to Theorem \ref{thm:unweighted-election} and obtains the same exponential bound as in \cite{bourneuf2025dense}.  The second proof, presented in Section \ref{sec:polyBound}, provides a quasilinear bound; it is based on a fixed point argument inspired by \cite{prasanna2026blog} and was obtained via AI.  More information can be found in Section \ref{sec:aiusage}.

\section{Bounding Domination in Tournaments using Elections}\label{sec:woVC}

We restate Theorem \ref{thm:main}, replacing the function $h$ with a precise bound.
\begin{theorem}[\cite{bourneuf2025dense}]\label{thm:BCT25}
If tournament $T$ satisfies $\chiv_f(T) \leq c$, then $\outdom(T) \leq 2^{O(c \log{c})}$.
\end{theorem}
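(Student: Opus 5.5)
The plan is a reduction to Theorem~\ref{thm:unweighted-election}, applied iteratively about $O(c)$ times with $\eps = \Theta(1/c)$. Each round contributes a factor $O(c^2)$ to the size of the dominating set, which gives $(O(c^2))^{O(c)} = 2^{O(c\log c)}$.

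\textbf{The election.} Suppose $\chiv_f(T)\le c$. Since transitive subsets are closed under taking subsets, there is a probability distribution $\mathcal{D}$ on transitive subsets $X\subseteq V(T)$ such that $\Pr_{X\sim\mathcal D}[v\in X]=1/c$ for every vertex $v$. The uniform coverage is what makes the comparisons below symmetric. Fix an arbitrary linear order $\sigma$ of $V(T)$. For each $X$ in the support we create two voters, each with weight $\mathcal{D}(X)/2$ (rational approximation handles the weights). Both voters rank $X$ on top in its transitive order, sources first, so that within $X$ ``ranked before'' implies an arc of $T$. Below $X$, one voter ranks $V\setminus X$ by $\sigma$ and the other by the reverse of $\sigma$.

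\textbf{What domination in the election gives.} Let $s$ be a vertex and $v$ a vertex with $v\to s$ in $T$. Pairs with both $s,v$ outside $X$ cancel between the two voters, so
\[
\Pr[v \text{ before } s]-\Pr[s\text{ before } v] \;=\; \Pr[v\in X]-\Pr[s\in X,\,v\notin X] \;=\; \Pr[s\in X,\,v\in X].
\]
Now let $S$ be the $(1/2-\eps)$-dominating set of size $O(1/\eps^2)$ given by Theorem~\ref{thm:unweighted-election}, and let $R$ be the set of vertices not dominated by $S$ in $T$. For every $v\in R$ and $s\in S$ we have $v\to s$ and $\Pr[s\text{ before } v]\ge 1/2-\eps$, hence $\Pr[s,v\in X]\le 2\eps$. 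Conditioned on $v\in X$, the probability that $X$ also contains a given $s\in S$ is therefore at most $2\eps c$. In words, vertices of $R$ rarely share a colour class with any individual dominator.

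\textbf{The decrement step (main obstacle).} We need to turn this into a strict decrease in the fractional dichromatic number, say $\chiv_f(T[R])\le c-\Omega(1)$. We then recurse on $T[R]$ and take the union of the dominating sets over at most $O(c)$ rounds. The natural attempt is to reweight $\mathcal D$ restricted to $R$, using that the colour classes through $S$ are essentially wasted on $R$. The difficulty is that the per-$s$ bound $2\eps c$ does not survive a union bound over $|S|=O(1/\eps^2)$ dominators.

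I would first try to strengthen the election step rather than the union bound. For instance, I would apply Theorem~\ref{thm:unweighted-election} to the election conditioned on a single heavy class, or pick $\eps$ adaptively, so that a constant fraction of the $\mathcal D$-mass meeting $R$ is certified to be usable on fewer vertices. If that fails, the fallback is to run the argument with a potential function different from $\chiv_f$: for example, the maximum over $v$ of $\Pr[v\in X]$ after renormalizing to $R$, which should drop by a constant factor per round. The iteration count then becomes $O(\log c)$ or $O(c)$ rounds, either of which is within the claimed $2^{O(c\log c)}$.
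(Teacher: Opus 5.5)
There is a genuine gap. Your election is sound: with uniform marginals $1/c$ the identity $\Pr[v\text{ before }s]-\Pr[s\text{ before }v]=\Pr[s,v\in X]$ holds, and the construction is a simpler, deterministic substitute for the paper's cluster-rounding election. What is missing is the decrement step. You leave it open, and the target you set for it is the wrong one.

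\textbf{You misread Theorem~\ref{thm:unweighted-election}.} A $(1/2-\eps)$-dominating set gives, for each $v\in R$, \emph{one} witness $s_v\in S$ with $\Pr[s_v\text{ before }v]\ge 1/2-\eps$. It does not give this inequality for every $s\in S$. So a vertex of $R$ is only known to rarely share a class with $s_v$, and it may share classes freely with the other dominators. The election therefore gives no handle on a bound $\chiv_f(T[R])\le c-\Omega(1)$ for the whole residual set. The fixes you list (conditioning on a heavy class, adaptive $\eps$, another potential) are not arguments.

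\textbf{Your accounting does not match.} A union of dominating sets over $O(c)$ rounds is additive. The factor $(O(c^2))^{O(c)}$ you quote only arises from branching.

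\textbf{The missing idea is exactly that branching.} Set $U_s=\{v\in R:\Pr[s,v\in X]\le 2\eps\}$, so that $R=\bigcup_{s\in S}U_s$, and treat each $U_s$ separately.
\begin{itemize}
\item Take the fractional cover $c\cdot\mathcal D$, keep only the sets avoiding $s$, and intersect them with $U_s$. Each $v\in U_s$ is then covered with weight $c\Pr[v\in X,\,s\notin X]\ge 1-2\eps c$, and the total weight is $c\Pr[s\notin X]=c-1$.
\item Dividing by $1-2\eps c$ gives $\chiv_f(T[U_s])\le (c-1)/(1-2\eps c)\le c-\tfrac12$, as soon as $2\eps c\le 1/(2c-1)$.
\item A dominating set of $T[U_s]$ dominates $U_s$ in $T$. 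Hence $h(c)\le |S|\bigl(1+h(c-\tfrac12)\bigr)$.
\item The base case is $\chiv_f(T)<3/2$: then $T$ has no directed triangle, so it is transitive.
\end{itemize}
Depth $O(c)$ and $|S|=\mathrm{poly}(c)$ give $2^{O(c\log c)}$. This is precisely the paper's proof: $U_x$ is the set red-dominated by $x$, and $z^x$ is the rescaled LP solution avoiding $x$.

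\textbf{Your parameter choice also fails.} With your election, $\eps=\Theta(1/c)$ is too large. Your margins are $\Pr[s,v\in X]/2=y_{sv}/(2c)$, so $\eps=\Theta(1/c)$ only yields $y_{sv}=O(1)$, which gives no decrement. You need $\eps=\Theta(1/c^2)$, hence $|S|=O(c^4)$; this is still enough for $2^{O(c\log c)}$. The paper can take $\eps=1/(16c)$ and $|S|=O(c^2)$ because each of its voters concatenates a whole partition into transitive classes. That gives margins of about $y_{ab}/4$ rather than $y_{ab}/(2c)$.
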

The main result of this section is to show that Theorem \ref{thm:unweighted-election} can be used to derive Theorem \ref{thm:BCT25}, yielding an alternate proof of Theorem \ref{thm:BCT25} that connects the fractional dichromatic number of tournaments to elections.
First, we formally define fractional dichromatic number.  Then we show how to use it to construct an election.  Finally, we show how to replace the VC-dimension argument of \cite{bourneuf2025dense} with Theorem \ref{thm:unweighted-election}.

\subsection{Fractional Dichromatic Number}

The {\em fractional dichromatic} number of a tournament $T$
equals the objective value of the following
linear program, which is a fractional relaxation of the dichromatic
number.  Let $\SST$ denote the vertex subsets that induce transitive subtournaments in $T$ (i.e., if $S \in \SST$, then $T[S]$ is transitive).
The variable $z_{S}$ denotes the fraction of the transitive
subtournament $T[S]$ used to cover vertices contained in $S$.

\begin{align*}
\min \sum_{S \in \SST} z_{S}\nonumber\\
\sum_{S: v \in S} z_{S} \geq 1 \quad \forall v \in V(T) \nonumber\\
z_{S} \geq 0.\nonumber \tag{P}\label{LP-frac}
\end{align*}
For a specified tournament $T$, we use $\zz$ to denote an optimal
basic feasible solution for \eqref{LP-frac} on $T$, and we use
$\chiv_f(T)$ to denote the optimal value of \eqref{LP-frac} on $T$.
The next definition quantifies the {\em coverage} of an arc in $T$ with
respect to $\zz$.

\begin{definition}
For arc $ab \in A(T)$, define the {\em coverage} of $ab$ as $y_{ab} :=
\sum_{S: a,b \in S} \zz_{S}$.
\end{definition}

\subsection{Clustering and Ranking: Constructing an Election}

The next lemma shows how to construct an election given a tournament
$T$ and a solution to \eqref{LP-frac} on $T$.  Recall that an election is a set of $m$ rankings on $n$ candidates.  The coverage of an arc
$ab$ in $T$ will be positively correlated with the fraction of voters
ranking $a$ before $b$ in the election.

\begin{lemma}\label{lem:construct_election}
Suppose we are given a tournament $T$ on $n$ vertices and an optimal
basic feasible solution $\zz$ for \eqref{LP-frac}.  Then for any $\delta > 0$, we can efficiently construct an election $\El$ with $m = O(\frac{\log n}{\delta^2})$ voters for which the following property holds:
\begin{itemize}
\item[] For each arc $ab \in T$, the fraction of voters in the election $\El$ that prefer $a$ to $b$ is at least $\frac{1}{2-y_{ab}} - \delta$.
\end{itemize}

\end{lemma}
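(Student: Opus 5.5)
The plan is to build a random ranking from $\zz$ by a ``clustering then ordering'' process, show that each arc $ab$ is respected with probability at least $\frac{1}{2-y_{ab}}$, and then take $m$ independent samples, using concentration to obtain the stated election.

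\textbf{Random ranking.} Let $Z=\chiv_f(T)=\sum_S \zz_S$, and view $\zz/Z$ as a probability distribution over $\SST$. Since $\zz$ is a basic feasible solution, its support has at most $n$ sets, so we can sample from it efficiently. Draw sets $S_1,S_2,\dots$ i.i.d.\ from this distribution. Assign each vertex $v$ to the first $S_i$ that contains it; this terminates almost surely, and in expected polynomial time, because every vertex has coverage at least $1$. The classes $C_i = S_i\setminus (S_1\cup\dots\cup S_{i-1})$ are subsets of transitive sets, hence transitive. The ranking lists the classes in order $C_1,C_2,\dots$ and orders each class according to its transitive order, i.e.\ consistently with the arcs of $T$.

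\textbf{Probability for a fixed arc.} Fix an arc $ab$, write $x_v=\sum_{S\ni v}\zz_S$ for the coverage of $v$, and let $y=y_{ab}$. Look at the first draw that meets $\{a,b\}$. If it contains both $a$ and $b$, they land in the same class and are ordered $a$ before $b$. If it contains only $a$, then $a$ is ranked before $b$. If it contains only $b$, then $b$ comes first. Conditioning on the draw meeting $\{a,b\}$, which has probability proportional to $x_a+x_b-y$, gives
\[
\Pr[a \text{ before } b] \;=\; \frac{x_a}{x_a+x_b-y}.
\]
When $x_a=x_b=1$ this is exactly $\frac{1}{2-y}$.

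\textbf{Main obstacle: over-covered vertices.} The formula above falls below $\frac{1}{2-y}$ if $x_b>1$. I would first try to show that an optimal solution can be taken with every coverage equal to $1$. Subsets of transitive sets are transitive, so removing an over-covered vertex from part of the mass of some $S$ keeps feasibility and the objective value. This normalizes $\zz$, but it can only decrease the values $y_{ab}$, so it changes the quantities in the statement. I would therefore try to avoid modifying $\zz$ and instead handle over-coverage inside the sampling. A vertex $v$ appearing in a drawn set $S$ is ``activated'' only with probability $1/x_v$, independently, and is assigned at its first activation. The test case is again the first draw hitting $\{a,b\}$. If the draw contains only $a$ or only $b$, the activation probabilities simply rescale the marginals $x_a,x_b$ to $1$. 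The cross term needs care: a draw containing both $a$ and $b$ might activate only $b$. I expect to verify that $\Pr[a \text{ before } b]\ge \frac{1}{2-y}$ still holds, possibly after correlating the activations of $a$ and $b$ within a common set (for instance, using one shared uniform threshold for all vertices of $S$, compared against $1/x_v$). If that fails, the fallback is complementary slackness: if a dual vertex variable is positive, that vertex's constraint is tight, and this would be combined with the normalization above.

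\textbf{Concentration.} Sample $m=C\frac{\log n}{\delta^2}$ independent rankings. For each arc $ab$, the fraction of voters placing $a$ before $b$ is an average of $m$ i.i.d.\ indicators with mean at least $\frac{1}{2-y_{ab}}$. Hoeffding's inequality bounds the probability that this fraction is below $\frac{1}{2-y_{ab}}-\delta$ by $e^{-2\delta^2 m}\le n^{-3}$. A union bound over the fewer than $n^2$ arcs shows that the resulting election $\El$ has the required property with high probability, and repeating until it holds makes the construction efficient.
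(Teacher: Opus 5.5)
Your argument is complete when every vertex has coverage exactly $1$. Your treatment of over-covered vertices, however, cannot be completed, because for a general optimal basic feasible solution the inequality you are aiming for is false.

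\textbf{Counterexample.} Take a directed triangle $p\to q\to r\to p$ and two vertices $a\to b$ that both beat $p,q,r$. Put weight $1/2$ on each of $\{a,b,p,q\}$, $\{a,b,q,r\}$ and $\{a,b,r,p\}$.
\begin{itemize}
\item This has value $3/2$ and is optimal: weights $1/2$ on $p,q,r$ form a dual certificate, since no transitive set contains the whole triangle.
\item It is basic: restricted to $p,q,r$, the three set columns are $(1,1,0),(0,1,1),(1,0,1)$, and these together with the slack columns of $a,b$ form a basis.
\item Here $y_{ab}=3/2$, so the lemma would require a fraction $1/(2-y_{ab})-\delta=2-\delta>1$ of voters, which is impossible.
\end{itemize}

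\textbf{Your repairs.} With a shared threshold, your own first-decisive-draw computation gives $\Pr[a\text{ before }b]=1/(2-y_{ab}/\max(x_a,x_b))$. The scheme is just sampling from a normalized solution with effective coverage $y_{ab}/\max(x_a,x_b)\le y_{ab}$. Independent activations give the even smaller $y_{ab}/(x_ax_b)$. The complementary-slackness fallback does not help either: in the example, $a$ and $b$ simply get dual weight $0$.

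\textbf{The fix.} The lemma has to be read for a $\zz$ in which every covering constraint of \eqref{LP-frac} is tight; the paper assumes exactly this at the start of its proof. You are right that normalizing lowers the $y_{ab}$, a point the paper passes over quickly. The correct response is to normalize first and define $y_{ab}$ from the normalized solution.
\begin{itemize}
\item Since $\SST$ is closed under subsets, the LP with equality constraints has the same optimum $\chiv_f(T)$.
\item An optimal basic solution of that LP has support at most $n$ and is also an optimal basic feasible solution of \eqref{LP-frac}.
\item This costs nothing in the proof of Theorem~\ref{thm:BCT25}, since the red arcs and the recursion there use $y$ computed from the same $\zz$.
\end{itemize}
With that choice, drop your ``main obstacle'' paragraph rather than repair it.

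\textbf{Comparison with the paper.} Your tight-case argument is a slightly leaner variant of the paper's. The paper adds two voters per sampled clustering, with clusters in forward and in reversed order. This makes $w_{ab}=(1+Y_{ab})/2$ depend only on the co-clustering probability $y_{ab}/(2-y_{ab})$. You instead use one voter per sample and compute $\Pr[a\text{ before }b]=1/(2-y_{ab})$ directly. The Hoeffding and union-bound step is the same in both.
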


\begin{proof}
Let $\{\zz_{S_1}, \zz_{S_2}, \ldots, \zz_{S_{\ell}}\}$ denote the
support of the optimal basic feasible solution $\zz$.  Suppose that
for all $v \in V$, the main constraint in \eqref{LP-frac} is tight,
namely,
$$\sum_{S_i: v \in S_i} \zz_{S_i} = 1 \quad \forall v \in V.$$ Notice
that this assumption does not change the objective value; in fact, we
could have used this equality constraint in \eqref{LP-frac}.
Moreover, observe there are at most $n$ induced subtournaments in the
support of $\zz$ (i.e., $\ell \leq n$).

We will now build an election $\El$.  We observe that \eqref{LP-frac}
is very similar to the {\em cluster LP} used for the Correlation
Clustering problem in \cite{cao2024understanding}.  We use Algorithm
\ref{alg:cluster-based} from \cite{cao2024understanding} to obtain a
clustering (i.e., a partition of the vertices) into ordered sets, where each set corresponds to a (possibly strict) subtournament of one of the subtournaments in the support of $\zz$.

\begin{algorithm}
    \caption{Cluster-Based Rounding}
        \label{alg:cluster-based}
    \begin{algorithmic}[1]
        \State $\mathcal C \gets \emptyset, V' \gets V(T)$
        \While{$V' \neq \emptyset$}
            \State randomly select a transitive subtournament induced on $S \subseteq V$, with probability $\frac{\zz_{S}}{\sum_{S_i} \zz_{S_i}}$ 
            \If{$V' \cap S \neq \emptyset$} ${\cal C} \gets {\cal C} \cup \{V' \cap S\}$, $V' \gets V' \setminus S$ \EndIf
        \EndWhile
        \State \Return $\mathcal C$
    \end{algorithmic}    
\end{algorithm}

For each arc $ab$ in $T$, we need to compute the probability that candidates $a$ and $b$ belong to the same cluster in a clustering $\mathcal C$ output by Algorithm \ref{alg:cluster-based}.  (These is the same proof found in \cite{cao2024understanding}.)
  
  \begin{claim}
For an arc $ab$ in $T$, the probability that $a$ and $b$ belong to the same cluster in $\mathcal C$ is $\frac{y_{ab}}{2-y_{ab}}$.
  \end{claim}

  \begin{cproof}
The total volume of $\sum_{S_i} \zz_{S_i}$ that contains at least one of $a$ and $b$ is $1+x_{ab}$, where $x_{ab} = 1 - y_{ab}$.   
Notice that each time we select a set $S$ with probability $\zz_s/(\sum_{S_i} \zz_{S_i})$, we either {\em decide} arc $ab$, meaning at least one of $a$ or $b$ belongs to $S$, or neither belongs to $S$.  In the first case, the probability that $a$ and $b$ both belong to set $S$ is $$\frac{y_{ab}}{1 + x_{ab}} = \frac{y_{ab}}{2 - y_{ab}},$$
which proves the claim.
  \end{cproof}

Suppose a clustering produced as output of Algorithm
\ref{alg:cluster-based} contains $p$ clusters. In other words, the
output clustering or vertex partition is $\mathcal C = \{V_1, V_2,
\ldots, V_p\}$.  Each $V_i$ is a (possibly strict) subset
of some $S_j$ in the support of $\zz$.  Let $T_i = T[V_i]$ be the
transitive subtournament induced on vertex set $V_i$ and let
$\vec{T_i}$ denote the transitive ordering of $V_i$.  We then
construct two voter lists with the candidates in the following orders:
$\vec{T}_1, \ldots, \vec{T}_p$ and $\vec{T}_p, \ldots, \vec{T}_1$.  We
then add these two voter lists to our election. We repeat this process
$k$ times to produce an election with $m=2k$ voters.

Let $Y_{ab}$ denote
the random variable whose value is the fraction of the $k$ clusterings
in which $a$ and $b$ appear together in the same cluster, let
$\mu_{ab}$ denote the expected value of $Y_{ab}$, and let $w_{ab}$
denote the fraction of the $2k$ voters who prefer candidate $a$ to
candidate $b$.  We have

$$ \mu_{ab} := \Ex[Y_{ab}] = \frac{y_{ab}}{2-y_{ab}} \quad {\text{ and
}} \quad w_{ab} = \frac{1 + Y_{ab}}{2}.$$
We now set $k = \log{n}/\delta^2$.  We will show that 
with high probability, for $\El$, there is no arc
$ab$ in $T$ for which $w_{ab}$ is much less than its expected value.

\begin{claim}
With high probability, for every arc $ab$, we have $w_{ab} \geq \frac{1 + \mu_{ab}}{2} - \delta$. 
  \end{claim}

\begin{cproof}
For arc $ab \in T$,  $$\Pr[Y_{ab} \leq \mu_{ab} - 2\delta] = 
\Pr[\mu_{ab} - Y_{ab}  \geq 2\delta] \leq 2e^{-8k\delta^2}.$$
Since $k = \log{n}/\delta^2$, we have
$$\Pr[Y_{ab}  \leq \mu_{ab} - 2\delta] \leq \frac{2}{n^8}.$$  Union bounding over all pairs implies that with probability at least $1-\frac{2}{n^6}$, for all arcs $ab$,
$$Y_{ab} \geq \mu_{ab} - 2\delta.$$  This implies that for all arcs $ab$, we have
$$w_{ab} \geq \frac{1 + \mu_{ab}}{2} - \delta$$ with high probability.
\end{cproof}
Observing that
$$ \frac{1 + \mu_{ab}}{2} = \frac{1}{2-y_{ab}}$$
finishes the proof of the lemma.
\end{proof}

\subsection{Proof of Theorem \ref{thm:BCT25}}

Now suppose we have a tournament $T$ such that $\chiv_f(T) = c$ and $\zz$ is a basic feasible solution for \eqref{LP-frac} on $T$ realizing this objective value.  Recall that we want to show that there exists a function $h$ such that $\outdom(T) \leq h(c)$.  We set $$\eps' := \frac{1}{2c-1}$$ and construct the tournament $T_R$ as follows.
\begin{itemize}

  \item Let $T_R = (V(T), A(T))$.
  
\item For an arc $ab$ in $T$, if $y_{ab} \leq \eps'$, add a ``red'' arc $ba$ to $T_R$.
  
\end{itemize}

Let $X$ be a dominating set for $T_R$.  Then for each $v \in V(T)$, we have that $v$ is either dominated by a vertex in $X$ via an arc in $A(T)$ or via a red arc.  The following lemma is based on ideas from \cite{bourneuf2025dense}.

\begin{lemma}[\cite{bourneuf2025dense}]
$h(c) = |X| + |X|\cdot h(c-1/2).$
\end{lemma}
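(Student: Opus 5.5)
The recurrence should mean: if $X$ dominates $T_R$, then $\outdom(T) \le |X| + |X|\cdot h(c-1/2)$. The plan is to handle the vertices dominated by ordinary arcs for free, and to show that the set of vertices dominated by red arcs from a fixed $x$ induces a subtournament of fractional dichromatic number at most $c-1/2$.

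First, every vertex dominated in $T_R$ by an arc of $A(T)$ from $X$ (or lying in $X$) is already dominated in $T$ by $X$; this accounts for the term $|X|$. The remaining vertices $v$ are dominated only by a red arc $xv$ with $x\in X$. By construction such an arc exists only when $vx\in A(T)$ and $y_{vx}\le \eps'$. For each $x\in X$, let
\[
R_x=\{v : vx\in A(T),\ y_{vx}\le\eps'\},
\]
which is contained in the in-neighborhood of $x$. It then suffices to show $\chiv_f(T[R_x])\le c-1/2$, since the induction hypothesis (monotonicity of $h$) gives a dominating set of $T[R_x]$ of size at most $h(c-1/2)$. The union of these sets over $x\in X$, together with $X$, dominates $T$, which gives the recurrence.

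The key step is bounding $\chiv_f(T[R_x])$. I would restrict $\zz$ to $R_x$, using $z'_S=\sum_{S'\cap R_x=S}\zz_{S'}$, which is feasible for (P) on $T[R_x]$ because each vertex keeps coverage at least $1$. I would then improve this solution. For $v\in R_x$, the mass of sets containing both $v$ and $x$ is $y_{vx}\le \eps'$, so at least $1-\eps'$ of the coverage of $v$ comes from sets avoiding $x$. The idea is to charge the sets containing $x$: they have total mass $1$ (tightness at $x$) but contribute at most $\eps'$ to each $v\in R_x$. Concretely, I would discard all sets containing $x$ and rescale the remainder by $1/(1-\eps')$. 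The sets avoiding $x$ have total mass at most $c-1$, so this gives the bound
\[
\chiv_f(T[R_x])\le \frac{c-1}{1-\eps'} .
\]
With $\eps'=1/(2c-1)$ we have $1-\eps'=(2c-2)/(2c-1)$, hence
\[
\frac{c-1}{1-\eps'}=\frac{2c-1}{2}=c-\tfrac12 ,
\]
exactly as required. This explains the choice of $\eps'$.

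The main obstacle is justifying that the sets containing $x$ have total mass exactly $1$, which uses the tightness assumption from the proof of Lemma~\ref{lem:construct_election} (WLOG all vertex constraints are tight). It also requires checking that rescaling keeps feasibility, i.e.\ each $v\in R_x$ has coverage at least $1-\eps'$ from sets avoiding $x$. Finally, the recursion should be unrolled: it terminates once $c\le 1$, when $T$ is transitive and $h=1$. This gives depth $O(c)$. Together with $|X|$ bounded via Theorem~\ref{thm:unweighted-election} applied to the election of Lemma~\ref{lem:construct_election}, it yields the $2^{O(c\log c)}$ bound.
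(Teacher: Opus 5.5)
Your proposal is correct and follows the paper's argument. You discard the transitive sets containing $x$ and rescale the rest by $1/(1-\eps')$; this covers the red-dominated in-neighbours of $x$, since $y_{vx}\le\eps'$ for each such $v$. Then $\sum_{S\not\ni x}\zz_S\le c-1$ together with $\eps'=1/(2c-1)$ gives $\chiv_f\le c-1/2$ on that set.

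The \emph{main obstacle} you flag is not actually one. You only need $\sum_{S\ni x}\zz_S\ge 1$, which is the LP constraint at $x$ itself, so no tightness assumption is required. The paper's base case ($\chiv_f<3/2$ forces transitivity) and your base case ($c\le 1$) both work.
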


\begin{proof}
Define $U \subset
V\setminus{X}$ to be the set of vertices dominated by $X$ on red arcs.  
Let $U_x$ be the subset of $U$ dominated by vertex $x$.
  For each $x \in X$, we construct a new LP solution $z^x$ where
  \begin{equation*}
  z^x_{S}=\begin{cases}
    \zz_{S}/(1-\eps') & \text{if $x \notin S$},\\
    0 & \text{if $x \in S$}.
  \end{cases}
  \end{equation*}
We have the following claim with respect to the LP solution $z^x$.
    \begin{claim}
    $z^x$ is a feasible solution for \eqref{LP-frac} on the tournament $T[U_x]$.
      \end{claim}

    \begin{cproof}
We need to show that for each vertex $u \in U_x$, we have $$\sum_{S:u
  \in S} z^x_S \geq 1.$$ Each $u \in U_x$ is dominated by $x \in X$
but arc $ux$ belongs to $T$, so $y_{ux} \leq \eps'$.  Hence, we
have $$\sum_{S: u \in S, x \notin S} \zz_S \geq 1-\eps'.$$ Since the
scaling factor used to construct $z^x$ is $1-\eps'$, we conclude that
$u$ is sufficiently covered.~\end{cproof}

    Now we want to show that $\chiv_f(T[U_x]) \leq c - 1/2$.  
    Since $\sum_{S:x\in S} \zz_S \geq 1$, we have $\sum_{S:x \notin S} \zz_S \leq c-1$.  We have
\begin{align*}
    &\frac{(c-1)}{1-\eps'} ~\leq~ c-\frac{1}{2} \quad \iff \quad \eps' \leq  \frac{1}{2c-1},
    \end{align*}
which is satisfied by our choice of $\eps'$.

If $\chiv_f(T) <
3/2$, then $T$ is acyclic, since it cannot contain a directed
triangle.  This is the base case of the induction.
\end{proof}    

To complete the proof of Theorem \ref{thm:BCT25},
we just need to find a dominating set $X$ for $T_R$.  
We
construct an election $\El$ by Lemma \ref{lem:construct_election}.
We set $\eps := \delta := 1/{16c}$.  
We let $T_M$ be the $(1/2-\eps)$-majority tournament for this election.
We say that arc $xu$ is a ``red'' arc of $T_M$ if 
\begin{itemize}

\item $xu \notin T$, and

\item $\frac{1}{2} -\eps \leq w_{xu} \leq \frac{1}{2}$.
  
\end{itemize}

\begin{lemma}\label{lem:RT}
A dominating set of $T_M$ is a dominating set of $T_R$
  \end{lemma}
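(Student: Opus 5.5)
The plan is to check the claim arc by arc. It suffices to show that every arc $xv$ of $T_M$ with $x \neq v$ is also an arc of $T_R$. Then, if $X$ dominates $T_M$, each $v \notin X$ has an in-neighbor $x \in X$ in $T_M$, and this $x$ is also an in-neighbor of $v$ in $T_R$.

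Fix an arc $xv$ of $T_M$, so $w_{xv} \geq \frac12 - \eps$, where $w_{xv}$ is the fraction of voters of $\El$ ranking $x$ before $v$. Since $T$ is a tournament, either $xv \in A(T)$ or $vx \in A(T)$.
\begin{itemize}
\item[] If $xv \in A(T)$, then $xv$ is an arc of $T_R$, because $T_R$ contains all of $A(T)$.
\item[] If $vx \in A(T)$ and $y_{vx} \leq \eps'$, then the red arc $xv$ was added to $T_R$ by construction.
\end{itemize}
So the only case to rule out is $vx \in A(T)$ with $y_{vx} > \eps'$. The idea is that this would force $w_{xv}$ below $\frac12 - \eps$, contradicting $xv \in A(T_M)$.

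Assume $vx \in A(T)$ and $y_{vx} > \eps'$. Lemma~\ref{lem:construct_election} applied to the arc $vx$ gives $w_{vx} \geq \frac{1}{2-y_{vx}} - \delta$. Since $\frac{1}{2-y}$ is increasing in $y$, this yields $w_{vx} > \frac{1}{2-\eps'} - \delta$. Every voter submits a total ranking, so $w_{xv} = 1 - w_{vx}$, and therefore
$$w_{xv} \;<\; 1 - \frac{1}{2-\eps'} + \delta \;=\; \frac12 - \frac{\eps'}{2(2-\eps')} + \delta .$$

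It remains to verify that $\frac{\eps'}{2(2-\eps')} \geq \eps + \delta = \frac{1}{8c}$, which gives $w_{xv} < \frac12 - \eps$, the desired contradiction. Because $0 < \eps' < 2$, we have $2-\eps' < 2$, hence $\frac{\eps'}{2(2-\eps')} > \frac{\eps'}{4}$. With $\eps' = \frac{1}{2c-1}$ this equals $\frac{1}{8c-4} > \frac{1}{8c}$, so the inequality holds.

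The main point needing care is this constant check: the slack $\frac{\eps'}{2(2-\eps')}$ must dominate $\eps + \delta$. The choice $\eps = \delta = 1/(16c)$ is designed to make this work. I also use the facts that $w_{xv} + w_{vx} = 1$ (there are no ties in a ranking) and that the high-probability guarantee of Lemma~\ref{lem:construct_election} holds simultaneously for all arcs of $T$. Under that guarantee, every arc of $T_M$ lies in $T_R$, and the lemma follows.
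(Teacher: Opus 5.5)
Your proof is correct and is essentially the paper's argument: both show that every arc of $T_M$ not in $T$ is a red arc of $T_R$. Both do this by applying Lemma~\ref{lem:construct_election} to the reversed arc of $T$ and using $w_{xv}=1-w_{vx}$. The only difference is presentational: the paper solves directly for $y_{ux}\le \frac{4(\eps+\delta)}{1+2(\eps+\delta)}\le\eps'$, while you argue by contradiction from $y_{vx}>\eps'$ and verify $\frac{\eps'}{2(2-\eps')}>\frac{1}{8c}=\eps+\delta$.
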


\begin{proof}
  We will show that a red arc in $T_M$ is a red arc in $T_R$.
    Recall that $V(T_R) = V(T_M)$ and that $T_R$ and $T_M$ each have $T$ as a subgraph, which comprises their non-red arcs.

\begin{claim}\label{clm:redarc}
A red arc in $T_M$ is a red arc in $T_R$.
  \end{claim}

\begin{cproof}
  If an arc $xu \notin T$ is a red arc of $T_M$, this implies that
\begin{eqnarray*}
\frac{1}{2-y_{ux}} - \delta \leq w_{ux} \leq \frac{1}{2} + \eps,
\end{eqnarray*}
where the lower bound follows from 
Lemma \ref{lem:construct_election}, and the upper bound follows from the definition of a red arc in $T_M$. 
So we have
\begin{align*}
    & \frac1{2-y_{ux}} \leq \frac12 + \eps + \delta\\
    & \iff 2 \leq (1+2(\eps+\delta))(2-y_{ux})\\
    &\iff 2 \leq 2+ 4(\eps + \delta) - y_{ux}(1+2(\eps + \delta))\\
    &\iff y_{ux} \leq \frac{4(\eps+\delta)}{1+2(\eps + \delta)} \leq \eps'.
\end{align*}
We can conclude that arc $xu$ is a red arc in $T_R$.
  \end{cproof}

Claim \ref{clm:redarc} implies that a dominating set in $T_M$ is a dominating set in $T_R$.~\end{proof}

Finally, we can apply Theorem \ref{thm:unweighted-election} and find a
$(\frac{1}{2} - \eps)$-dominating set $X \subset V$ for the election
$\El$, which is a dominating set for $T_M$ and hence for $T_R$ by
Lemma \ref{lem:RT}.  Theorem \ref{thm:unweighted-election} implies $X$
has cardinality $O(c^2)$ since $\eps = 1/16c$.  So we have $$h(c) = C
\cdot c^2 + C \cdot c^2 \cdot h(c-1/2),$$ where $C$ is a universal
constant and we conclude that $h(c) = 2^{O(c \log{c})}$.

\section{Polynomial Bound for Domination via Fixed Points}\label{sec:polyBound}

In this section, we show that the domination number of $T$ is quasilinear in its fractional dichromatic number.  The main tool is Brouwer's fixed-point theorem.  The proof was found by AI based on a prompt inspired by \cite{prasanna2026blog}.

\begin{theorem}\label{thm:quasilinear}
If tournament $T$ satisfies $\chiv_f(T) \leq c$, then  $\outdom(T) \leq \lceil 2c \ln{(2c)} \rceil$. 
\end{theorem}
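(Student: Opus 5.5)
The plan is a probabilistic argument. We sample $k := \lceil 2c\ln(2c)\rceil$ vertices independently from a carefully chosen distribution $p$ on $V(T)$, and control the undominated set $U$ through a potential built from the optimal solution $\zz$ of \eqref{LP-frac}. For $U \subseteq V(T)$ define
$$\Phi(U) := \sum_{S \in \SST} \zz_S \cdot \mathbbm{1}[S \cap U \neq \emptyset].$$
If $U \neq \emptyset$, pick $v \in U$; then $\Phi(U) \geq \sum_{S \ni v} \zz_S \geq 1$. Hence it suffices to show $\Ex[\Phi(U)] < 1$, since then some outcome of the $k$ samples has $U = \emptyset$, i.e.\ is a dominating set.

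Say that a vertex $x$ \emph{covers} $S$ if every vertex of $S$ equals $x$ or is an out-neighbor of $x$. For instance, the source of the transitive tournament $T[S]$ covers $S$. The key intermediate statement is:
\begin{itemize}
\item[] there is a distribution $p$ on $V(T)$ such that for every $S$ in the support of $\zz$, $\Pr_{x \sim p}[x \text{ covers } S] \geq \frac{1}{2c}$.
\end{itemize}
Given this, for each $S$ in the support we get $\Pr[S \cap U \neq \emptyset] \leq (1-\frac{1}{2c})^k \leq e^{-k/(2c)} \leq \frac{1}{2c}$. Summing against the weights $\zz_S$, whose total is $c$, gives $\Ex[\Phi(U)] \leq c \cdot \frac{1}{2c} = \frac12 < 1$, which finishes the proof.

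The main obstacle is the existence of $p$. Following the hint of the section, I would prove it by a fixed-point (equivalently, minimax) argument on the zero-sum game between a "vertex player" choosing $x$ and a "set player" choosing $S$ from the support of $\zz$, with payoff $\mathbbm{1}[x \text{ covers } S]$. We need the value to be at least $\frac{1}{2c}$. By minimax (or Brouwer applied to the usual best-response smoothing map on the product of simplices), it suffices to show the following: for every distribution $\lambda$ on the sets $S$, some $x$ covers a $\lambda$-mass of at least $\frac{1}{2c}$.

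To find such an $x$, I would use the LP structure together with tournament domination. Consider the vertex weighting $\rho(v) := \sum_{S \ni v} \lambda_S \cdot (\text{normalization})$, chosen so that it relates to $\zz$. Then apply the fact used for Observation~\ref{obs:fractBounded}, that fractional domination of a tournament is at most $2$: there is a vertex $x$ which, together with its out-neighborhood, captures at least half of the relevant mass. The factor $\frac{1}{c}$ should come from the coverage constraint $\sum_{S \ni v} \zz_S = 1$, which lets one pass between vertex weights and set weights (total $c$).

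I expect the delicate point to be that "covering all of $S$" is stronger than dominating a single vertex of $S$. So I would first try to encode each $S$ by its source vertex $s(S)$, and show that dominating or equalling the source, combined with the transitive order, suffices. If this fails, I would weaken the target in the intermediate statement to "$x$ dominates $S \cap U$" and instead run the fixed point on a dynamically reweighted distribution.
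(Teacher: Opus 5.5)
\paragraph{Verdict.} There is a genuine gap: your key intermediate statement is false.

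\paragraph{The reduction is fine.} If the statement held, the estimate $\Ex[\Phi(U)]\le c\cdot\frac{1}{2c}<1$ would finish the proof. But no minimax or Brouwer argument can deliver it. Take the regular tournament on $\mathbb{Z}_5$ with arcs $i\to i+1$ and $i\to i+2$.
\begin{itemize}
\item A transitive set has a source dominating the rest, and all out-degrees are $2$. So the maximum transitive sets have size $3$ and are exactly $I_i=\{i,i+1,i+2\}=N^+[i]$.
\item Hence $\chiv_f(T)=5/3$, and every optimal solution is supported on the $I_i$ with all coverage constraints tight.
\item The system $z_{I_v}+z_{I_{v-1}}+z_{I_{v-2}}=1$ forces $z_{I_i}=1/3$ for all $i$. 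Subtracting consecutive equations gives $z_{I_v}=z_{I_{v-3}}$, and $3$ generates $\mathbb{Z}_5$.
\item Now $x$ covers $I_i$ if and only if $I_i\subseteq N^+[x]=I_x$, that is, $x=i$. So your statement demands $p(i)\ge \frac{1}{2c}=\frac{3}{10}$ for all five $i$, which is impossible. Equivalently, for uniform $\lambda$ every $x$ covers only $\lambda$-mass $\frac15<\frac{3}{10}$.
\end{itemize}
The same construction on $\mathbb{Z}_{2m+1}$ with $i\to i+1,\dots,i+m$ has $c<2$ but would require total mass $\frac{m+1}{2}$. So this is not a matter of constants. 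The theorem itself is fine on these examples, since $\outdom=2$ there.

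\paragraph{Why your heuristic breaks.} Fractional domination at most $2$ yields a vertex whose closed out-neighbourhood carries half of a \emph{vertex} weight. Covering instead needs a single vertex to absorb an \emph{entire} set. In the example, each $N^+[x]$ carries $3/5$ of the uniform weight yet contains only one of the five sets. Encoding $S$ by its source does not help, because beating the source says nothing about the rest of $S$. The fallback in your last sentence is not an argument.

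\paragraph{The paper's route.} The paper never asks for every support set to be handled.
\begin{itemize}
\item Assuming $k=\outdom(T)-1\ge 1$, it fixes a common in-neighbour $h(x_1,\dots,x_k)$ of every $k$-tuple.
\item It takes a Brouwer fixed point $p$ of the map sending $p$ to the law of $h(X_1,\dots,X_k)$, with $X_i$ i.i.d.\ $\sim p$.
\item The LP enters only through the averaging bound $\max_{S\in\SST}p(S)\ge 1/c$. This produces \emph{one} transitive $S'$ of mass $a\ge 1/c$.
\item Split $S'$ in its transitive order into a prefix $B$ with $p(B)\ge a/2$ and a suffix $C$ with $p(C)>a/2$. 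The dominator $Y\sim p$ can lie in $C$ only if no $X_i$ lies in $B$.
\item This gives $a/2<(1-a/2)^k$, forcing $k<2c\ln(2c)$.
\end{itemize}
The missing ingredient is the self-consistency of $p$ (the dominator has the same law as the samples) together with the prefix/suffix split; this replaces your too-strong requirement that a single vertex cover each whole support set.
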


\begin{proof}
Let $\{\zz_S\}$ be an optimal feasible solution to the fractional
  dichromatic number (i.e., an optimal solution
  to \eqref{LP-frac}). Hence, for each set $S$ for which $\zz_S > 0$,
  $T[S]$ is a transitive subtournament of $T$.  By assumption, we
  have \begin{eqnarray}
\sum_{S \in \SST} \zz_S = c,
        \end{eqnarray}
  where $\SST$ is the set of all vertex sets that induce transitive tournaments in $T$.

  \begin{claim}\label{clm:maxSet}
For any probability distribution $p$ on $V(T)$, there is an $S \in \SST$ such that $p(S) \geq \frac{1}{c}$.\footnote{For $X \subset V(T)$, we define $p(X) := \sum_{v \in X}p(x)$.} 
\end{claim}


\begin{cproof}
Let $p_{\max}$ denote $\max_{S \in \SST} p(S)$. Then
  \begin{eqnarray*}
p_{max} \cdot c = p_{\max} \sum_{S \in \SST} \zz_S \geq \sum_{S \in \SST} \zz_S \cdot p(S) =
\sum_{S \in \SST} \zz_S \sum_{v \in S} p(v) =
 \sum_v p(v) \sum_{S: v \in S} \zz_S \geq  \sum_v p(v)
\geq 1.
    \end{eqnarray*}
    The last inequality follows from the main constraint of \eqref{LP-frac}.
Thus, there exists some $S \subset V$, where $T[S]$ is a transitive subtournament, such that 
$p(S) \geq \frac{1}{c}$.
\end{cproof}

Set $k = \outdom(T) -1$ and assume $k \geq 1$.  Then every set of at
most $k$ vertices has a common in-neighbor (since it is not a
dominating set).  For every ordered $k$-tuple $(x_1, \ldots, x_k)$,
fix a vertex $h(x_1, \ldots, x_k)$ that is a common in-neighbor of
this $k$-tuple (i.e., beats these $k$ vertices).

For a probability distribution $p$ on $V$, let $F(p)$ be the
probability distribution of $h(X_1, \ldots, X_k)$, where $X_i$ are
independent with distribution $p$.  This is a continuous map from the
probability simplex to itself, so it has a fixed point, which we will
refer to as $\pp$ from now on.

Denote the vertex set whose existence is implied by Claim \ref{clm:maxSet} as $S'$ and let
$\ell = |S'|$.  Set $a = p(S') \geq 1/c$.  Order $S'$ transitively,
$v_1, \ldots, v_{\ell}$, all arcs going from left to right.  Let $j$ be
the first index for which the prefix has mass at least $a/2$.  Set
$B = \{v_1, \ldots, v_j\}$ and $C = \{v_j, \ldots, v_{\ell}\}$.  Then
$p(B) \geq a/2$ and $p(C) > a/2$.

Now we will draw $\{X_1, \ldots, X_k\}$ independently from distribution $p$.  This will also yield a $Y$ distributed according to $p$ with $Y \rightarrow X_i$ for all $i$.  If $Y \in C$, then none of the $X_i$'s can lie in $B$, since $Y$ beats all $X_i$'s.  Therefore,

\begin{eqnarray}
\frac{a}{2} < p(C) \leq \Pr[X_1,\ldots,X_k \notin B] = (1-p(B))^k \leq (1-a/2)^k.
  \end{eqnarray}
Using $a \geq 1/c$, we have
\begin{eqnarray}
\frac{1}{2c} < \left(1 - \frac{1}{2c} \right)^k \leq e^{-k/{2c}}.
  \end{eqnarray}
Hence, $k < 2c \ln{(2c)}$.  Since $\outdom(T) = k+1$ is an integer, this proves the claimed bound.
\end{proof}

\section{Conclusions}

Observation \ref{obs:fractBounded} and Theorem \ref{thm:quasilinear}
imply the following quadratic bound on the dichromatic number of
$t$-bounded tournaments.

\begin{theorem}\label{thm:tBounded}
Every $t$-bounded tournament $T$ satisfies $\chiv(T) \leq O(t^2 \log{t})$.
\end{theorem}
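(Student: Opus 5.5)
The plan is to chain Observation~\ref{obs:fractBounded} with Theorem~\ref{thm:quasilinear}, and then build a dichromatic colouring out of a small dominating set, using that every out-neighbourhood is $t$-colourable.

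First, by Observation~\ref{obs:fractBounded}, a $t$-bounded tournament $T$ satisfies $\chiv_f(T)\le 2t$. Applying Theorem~\ref{thm:quasilinear} with $c=2t$ gives
\[
\outdom(T)\le \lceil 4t\ln(4t)\rceil = O(t\log t).
\]
So there is a dominating set $X$ with $|X|=O(t\log t)$.

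Second, we colour $T$ using $X$, following the observation that $f(t)=t\cdot g(t)$. Every vertex $v\notin X$ has an in-neighbour in $X$, so
\[
V(T)=X\cup\bigcup_{x\in X}N^+(x).
\]
Each set $N^+(x)$ induces a subtournament with $\chiv\le t$, by $t$-boundedness. The union of two vertex sets has dichromatic number at most the sum of the two dichromatic numbers, since we can use disjoint colour classes, and restricting a transitive class to a subset keeps it transitive. Hence the vertices outside $X$ need at most $t|X|$ colours. The vertices of $X$ need at most $|X|$ further colours, one singleton class each; alternatively, $X$ can be split into at most $\lceil |X|/2\rceil$ transitive pairs. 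In total,
\[
\chiv(T)\le (t+1)|X| = O(t\cdot t\log t)=O(t^2\log t).
\]

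I do not expect a real obstacle. The one point to check is that Observation~\ref{obs:fractBounded} really gives $\chiv_f(T)\le 2t$. Its justification in the paper is that the fractional domination number of a tournament is at most $2$. Concretely, take a fractional dominating weighting $w$ of total weight at most $2$, coming from the minimax strategy of the tournament game. Put weight $w(x)$ on an optimal fractional $t$-colouring of $N^+(x)$, and add weight $w(x)$ for the set $\{x\}$ itself. Every vertex $v$ then has $\sum_{x:\,v\in N^+(x)\cup\{x\}} w(x)\ge 1$, so it is covered. The total cost is at most $(t+1)\cdot 2$, which is still $O(t)$. So even if the constant is $2t+2$ rather than $2t$, the final bound $O(t^2\log t)$ is unaffected, and I would simply invoke the observation as stated.
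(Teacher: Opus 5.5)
Your proposal is correct and takes essentially the paper's route: Observation~\ref{obs:fractBounded} gives $\chiv_f(T)\le 2t$, Theorem~\ref{thm:quasilinear} then gives a dominating set of size $O(t\log t)$, and covering $V(T)$ by the out-neighbourhoods of that set (the observation $f(t)=t\cdot g(t)$) gives $O(t^2\log t)$. Your extra $+1$'s are harmless and can be dropped, because each $x$ beats every vertex of $N^+(x)$ and so can be added as a source to any transitive class of $T[N^+(x)]$; hence $\chiv(T[N^+(x)\cup\{x\}])\le t$, which recovers the paper's constants $2t$ and $t\cdot|X|$.
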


We say a tournament is {\em $t$-arc-bounded} if for every arc $uv$,
the subtournament induced on the vertex set $N^+(v) \cap N^-(u)$ has
dichromatic number at most $t$.  Such tournaments were shown to have
dichromatic number bounded by a function depending only on
$t$~\cite{klingelhoefer2024bounding,NSS24}.  Theorem 57 in
\cite{bourneuf2025dense} gives another proof of this; its key lemma is
Lemma 59 in \cite{bourneuf2025dense}, which shows that such
tournaments have fractional dichromatic number at most $20t$.  All
these proofs bound the dichromatic number as an exponential function
of $t$.  Using Theorem \ref{thm:quasilinear}, we can obtain a
polynomial bound.  Specifically, applying Theorem
\ref{thm:quasilinear} and Lemma 2.6 from
\cite{klingelhoefer2024bounding} implies a cubic bound on the
dichromatic number of $t$-arc-bounded tournaments.

\begin{theorem}\label{thm:arcCubic}
Every $t$-arc-bounded tournament $T$ satisfies $\chiv(T) \leq O(t^3 \log^2{t})$.
\end{theorem}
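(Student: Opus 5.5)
The plan is to combine Theorem \ref{thm:quasilinear} with a bound on the fractional dichromatic number of $t$-arc-bounded tournaments, and then to use the known reduction from domination to dichromatic number for arc-bounded tournaments. Concretely, I will first invoke Lemma 59 of \cite{bourneuf2025dense}, which gives $\chiv_f(T) \leq 20t$ for every $t$-arc-bounded tournament $T$. Since induced subtournaments of a $t$-arc-bounded tournament are again $t$-arc-bounded (the sets $N^+(v)\cap N^-(u)$ only shrink), this bound holds hereditarily. Theorem \ref{thm:quasilinear} then gives $\outdom(T') \leq \lceil 40t\ln(40t)\rceil = O(t\log t)$ for every subtournament $T'$ of $T$.

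The second step converts bounded domination into bounded dichromatic number. Take a dominating set $X$ of $T$ with $|X| = d = O(t\log t)$. Every vertex outside $X$ is an out-neighbor of some $x\in X$, so $\chiv(T) \leq d + \sum_{x\in X}\chiv(T[N^+(x)])$, and it suffices to bound the dichromatic number of a single out-neighborhood by $O(t^2\log t)$. This is where Lemma 2.6 of \cite{klingelhoefer2024bounding} enters. As I understand it, that lemma colors $N^+(x)$ in an arc-bounded tournament given a small dominating set of the in-structure. Concretely, it dominates $N^+(x)$ ``backwards'' by a set $D$ of size $O(t\log t)$ inside $N^+(x)$, using domination in the reversed tournament, whose fractional dichromatic number is the same. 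Every vertex $w \in N^+(x)\setminus D$ is then an in-neighbor of some $u \in D$, so $w$ lies in $N^+(x)\cap N^-(u)$, which is exactly the set controlled by the arc $xu$. That set has dichromatic number at most $t$. Hence $\chiv(T[N^+(x)]) \leq |D| + |D|\cdot t = O(t^2\log t)$.

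Multiplying, $\chiv(T) \leq d + d\cdot O(t^2\log t) = O(t\log t)\cdot O(t^2\log t) = O(t^3\log^2 t)$, which is the claimed bound.

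The main obstacle is making sure the second step uses exactly the hypothesis. The vertices $w$ must satisfy $w\in N^+(x)\cap N^-(u)$ for an arc $xu$, so $D$ must dominate $N^+(x)$ in the reverse direction. This requires applying Theorem \ref{thm:quasilinear} to the reversal of $T[N^+(x)]$. That is legitimate because reversing arcs preserves transitivity of every set and hence preserves the LP \eqref{LP-frac}, so $\chiv_f$ is unchanged. If Lemma 2.6 of \cite{klingelhoefer2024bounding} is phrased differently, I would simply prove this two-level covering directly as above, since it needs only hereditary domination bounds.
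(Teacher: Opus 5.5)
You have a genuine gap in the inner step. Your outer layer is sound and is essentially the paper's alternate route through Lemma~\ref{lem:arcToVertex} and its corollary: $\chiv_f\le 20t$ (hereditarily) gives $\outdom,\indom=O(t\log t)$, and $\chiv(T)\le\outdom(T)\,(1+\max_x\chiv(T[N^+(x)]))$.

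The problem is the claim $\chiv(T[N^+(x)\cap N^-(u)])\le t$. By definition, an arc $uv$ controls $N^+(v)\cap N^-(u)$, the vertices $w$ with $u\to v\to w\to u$. So the arc $xu$ controls $N^+(u)\cap N^-(x)$. That set lies inside $N^-(x)$ and is therefore disjoint from $N^+(x)$. The set you actually obtain is $N^+(x)\cap N^-(u)$ with $x\to u$. It consists of the vertices $w$ with $x\to w\to u$, which form a \emph{transitive} triangle with $xu$. This is the set the paper calls $\Phi(xu)$, and the hypothesis does not bound it directly. Because you chose $D\subseteq N^+(x)$, every vertex you cover falls into such a $\Phi$-set. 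So the bound $\chiv(T[N^+(x)])\le |D|(1+t)$ is unjustified as written. For the same reason, your fallback claim that the two-level covering ``needs only hereditary domination bounds'' is false.

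The missing ingredient is $\chiv(T[\Phi(xu)])=O(t)$, which is the paper's Claim~\ref{clm:forwardNeighborhood}. It needs a real argument:
\begin{enumerate}
\item You may assume $T$ is strongly connected, since $\chiv$ is the maximum over strong components.
\item Take a shortest directed path $P$: $u=p_0\to p_1\to\cdots\to p_r=x$.
\item For $w\in\Phi(xu)\setminus V(P)$, let $p_i$ be the first vertex of $P$ that beats $w$. This gives the directed triangle $p_{i-1}\to p_i\to w\to p_{i-1}$, so $w\in\Delta(p_{i-1}p_i)$.
\item Lemma~2.6 of \cite{klingelhoefer2024coloring} then colors $T[\Phi(xu)\cup V(P)]$ with at most $5t$ colors, using that $P$ is shortest.
\end{enumerate}
With this inserted, your computation goes through with $5t$ in place of $t$ and still yields $O(t^3\log^2 t)$.

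For comparison, the paper uses an absorbing set of all of $T$ rather than of $T[N^+(x)]$. Then a vertex $w\in N^+(x)$ absorbed by $y$ lies in the genuine triangle set $\Delta(yx)$ when $y\to x$, and falls into $\Phi(xy)$ only when $x\to y$. The $\Phi$ bound is still needed for that second case.
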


We can actually improve the bound in Theorem \ref{thm:arcCubic} to
match the bound stated for $t$-bounded tournaments in Theorem
\ref{thm:tBounded}.  Before we prove this, we make another
observation relating $t$-bounded and $t$-arc-bounded tournaments.
Recall that $\outdom(T)$ denotes the size of the smallest dominating
set of $T$.  Similarly, we use $\indom(T)$ to denote the size of the
minimum absorbing set in $T$, where $S \subset V(T)$ is an absorbing
set of $T$ if every vertex in $T$ either belongs to $S$ or has at
least one out-neighbor in $S$.  By reversing arc directions, notice
that the upper bound on $\outdom(T)$ stated in Theorem
\ref{thm:quasilinear} also holds for $\indom(T)$.
\begin{lemma}\label{lem:arcToVertex}
Let $T$ be a $t$-arc-bounded tournament.  For every $v \in V(T)$, we have
\begin{itemize}
  
\item[(i)]  $\chiv(T[N^-(v)]) \leq O(t \cdot \outdom(T))$, and

  \item[(ii)] $\chiv(T[N^+(v)]) \leq O(t \cdot \indom(T))$.

\end{itemize}
  
  \end{lemma}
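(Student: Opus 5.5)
Fix $v$ and a minimum dominating set $D$ of $T$, so $|D| = \outdom(T)$. The idea for (i) is to cover $N^-(v)$ by pieces, each lying inside the interval $N^+(x)\cap N^-(u)$ of some arc $ux$. Each such piece has dichromatic number at most $t$ by $t$-arc-boundedness.

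Every $u \in N^-(v)$ either lies in $D$ or has an in-neighbor $d \in D$. The vertices of $N^-(v)\cap D$ contribute at most $|D|$ singleton color classes. The remaining vertices $u \in N^-(v)\setminus D$ are partitioned by choosing, for each, one dominator $d \in D$ with $d \to u$. This gives the sets $A_d := \{u \in N^-(v) : d\to u\}$.

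Fix $d \in D$ and split according to the arc between $d$ and $v$.
\begin{itemize}
\item If $d = v$, then $A_d \subseteq N^+(v)\cap N^-(v)=\emptyset$.
\item If $v \to d$, then each $u \in A_d$ satisfies $d \to u$ and $u \to v$. Hence $A_d \subseteq N^+(d)\cap N^-(v)$, which is the interval set attached to the arc $vd$ (take the arc $vd$ with tail $v$ and head $d$: the set $N^+(d)\cap N^-(v)$). This set has $\chiv \le t$.
\item If $d \to v$, then $A_d$ lies in $N^+(d)\cap N^-(v)$, and this set is not an arc interval in the right orientation, since the arc goes $d\to v$. This case needs more work.
\end{itemize}

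For the last case I would iterate once more. Consider $T' = T[N^+(d)\cap N^-(v)]$ and use the set $N^+(v)$. Every $w \in N^+(v)$ with $d\to w$... this does not immediately help. A cleaner plan is to apply domination inside the arc $d\to v$ differently. Each $u \in A_d$ satisfies $u\to v$ and $d\to u$, so the triple $d,u,v$ is transitive. I would instead also use $v$'s relation via a second dominator: take $D$ to dominate $v$ as well. Let $d_0\in D$ with $d_0\to v$, or $v\in D$. Then split $A_d$ by the arc between $u$ and $d_0$.

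\begin{itemize}
\item If $u\to d_0$, then $u\in N^-(d_0)$, and $u \in N^+(d)$.
\item Otherwise $d_0\to u\to v$ while $d_0\to v$.
\end{itemize}

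Honestly, the main obstacle is exactly the case $d\to v$. My fallback is to exploit that $N^+(d)\cap N^-(v)$, for $d\to v$, is covered by the union over $w\in N^+(v)$ of sets $N^+(v)$... which is not available either.

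So I would take the alternative route that I expect the intended proof uses. Use $\chiv(T[N^+(d)\cap N^-(v)])$ for the arc $d\to v$ only through the out-neighborhoods of vertices on the arc. For $u\in A_d$, the 3-cycle structure is missing, so instead choose $D$ to be a dominating set of $T[N^-(v)]$ itself. This is possible with size at most $\outdom(T)$ after the usual projection: apply domination in the subtournament, whose domination is bounded by the $\outdom$ bound of Theorem~\ref{thm:quasilinear}, which is inherited since $\chiv_f$ is monotone.

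Within $N^-(v)$, every $u$ is dominated by some $d\in D\subseteq N^-(v)$. Then $u\in N^+(d)\cap N^-(v)$ with $d\to v$, and the arc $dv$ has interval $N^+(v)\cap N^-(d)$, which is the wrong one. So I would instead dominate by an absorbing-type choice. Take an absorbing set $D$ of $T[N^-(v)]$, so that every $u$ has $u\to d$ with $d \in D$, $d\to v$. Then $u\in N^-(d)$, but again the orientation is wrong.

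Resolving this orientation issue, by picking whichever of $\outdom$ and $\indom$ aligns $d\to u\to v$ with an arc $v\to d$ (that is, dominating $N^-(v)$ from vertices of $N^+(v)$), is the step I expect to be hardest. Part (ii) then follows by reversing all arcs.
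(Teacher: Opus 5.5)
Your case split is exactly the skeleton of the paper's proof. You dominate each $u\in N^-(v)\setminus D$ by some $d\in D$ and look at the arc between $d$ and $v$, and you correctly settle the case $v\to d$ by $t$-arc-boundedness of the arc $vd$. The genuine gap is the case $d\to v$. There you need $\chiv(T[N^+(d)\cap N^-(v)])=O(t)$ and never obtain it, and this is the only nontrivial step.

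The missing idea is that in a $t$-arc-bounded tournament this ``forward'' set of an arc is also $O(t)$-colorable. Take a shortest directed path $v=p_0\to p_1\to\cdots\to p_r=d$; this requires $d$ and $v$ to lie in the same strong component. Any $w\in N^+(d)\cap N^-(v)$ off the path satisfies $w\to p_0$ and $p_r\to w$. So at the first index $i$ with $p_{i+1}\to w$ you get the directed triangle $p_i\to p_{i+1}\to w\to p_i$, i.e.\ $w\in N^+(p_{i+1})\cap N^-(p_i)$, the triangle set of the path arc $p_ip_{i+1}$. The set is therefore covered by $V(P)$ together with sets of dichromatic number at most $t$. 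Because $P$ is shortest (every chord points backwards), Lemma~2.6 of \cite{klingelhoefer2024coloring} colors this union with $5t$ colors in total, independently of $r$. Each $A_d$ then needs at most $5t$ colors, giving $|D|+5t|D|=O(t\cdot\outdom(T))$.

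None of your fallbacks can replace this. A dominating set of $T[N^-(v)]$ need not have size at most $\outdom(T)$, because domination number is not monotone under taking subtournaments. Bounding it via Theorem~\ref{thm:quasilinear} instead gives a bound in terms of $\chiv_f(T)$ rather than $\outdom(T)$, and it leaves the orientation problem untouched anyway. Dominating $N^-(v)$ from inside $N^+(v)$ is impossible in general: a vertex $u\in N^-(v)$ may beat every vertex of $N^+(v)$, even in a strongly connected tournament. In that case every in-neighbor $d$ of $u$ satisfies $d\to v$. So the case $d\to v$ cannot be avoided by choosing $D$ cleverly; it has to be handled by an argument like the path argument above.
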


\begin{proof}
Let $\Delta(uv) := N^+(v) \cap N^-(u)$ be the vertices in $V(T)$ that
make a directed triangle with arc $uv$. Let $\Phi(uv) := N^+(u)\cap
N^-(v)$ be the vertices in $V(T)$ that occur in the middle of two-arc
paths from $u$ to $v$ for an arc $uv$.  Since $T$ is $t$-arc-bounded,
for any arc $uv$ in $T$, we have $\chiv(T[\Delta(uv)]) \leq t$.  Using
the tools from \cite{klingelhoefer2024coloring}, we show that we can
also color $\Phi(uv)$ with few colors.

\begin{claim}\label{clm:forwardNeighborhood}
An arc $uv$ in a $t$-arc-bounded tournament $T$ satisfies $\chiv(T[\Phi(uv)]) \leq 5t$.
  \end{claim}

\begin{cproof}
Let $P$ be a shortest path from $v$ to $u$ in $T$ and let $V(P)$ be
its vertices (including $u$ and $v$) and $A(P)$ its arcs.  Consider
the subtournament of $T$ induced on vertex set $V'=\Phi(uv) \cup
V(P)$.  Then every vertex in $\Phi(uv)$ belongs to $\Delta(e)$ for
some arc $e$ in $A(P)$.  By Lemma 2.6 in
\cite{klingelhoefer2024coloring}, the tournament $T[V']$ can be
colored with at most $5t$ colors.
  \end{cproof}

Let $X$ be a minimum dominating set of $T$, and let $u$ be a vertex not in $X$.  Then there is an arc $xu$ for some vertex $x \in X$.  If all vertices in $N^-(u)$ are in $X$, then (i) holds, since then there are at most $|X|$ vertices in $N^-(u)$.  So suppose there is a vertex $v \in N^-(u)$ for $v \notin X$.  Then there is some $y \in X$ such that arc $yv$ is in $T$.  Then there are two cases: if arc $uy \in A(T)$, then $v \in \Delta(uy)$, while if $yu \in A(T)$, then $v \in \Phi(yu)$.  Applying Claim \ref{clm:forwardNeighborhood}, we conclude that $T[N^-(u)]$ can be colored with at most $|X| + 5t \cdot |X| = O(t \cdot \outdom(T))$ colors.  An analogous argument can be used to prove (ii).~\end{proof}

Theorem \ref{thm:quasilinear} and the following corollary of Lemma \ref{lem:arcToVertex} give an alternate proof for Theorem \ref{thm:arcCubic}.

\begin{corollary}
A $t$-arc-bounded tournament is an $O(t\cdot \indom(T))$-bounded tournament. 
  \end{corollary}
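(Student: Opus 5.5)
This corollary follows directly from part (ii) of Lemma \ref{lem:arcToVertex}. Recall that $T$ is $s$-bounded if, for every vertex $v$, the out-neighborhood tournament $T[N^+(v)]$ has dichromatic number at most $s$. Part (ii) gives exactly such a uniform bound: for every $v \in V(T)$ we have $\chiv(T[N^+(v)]) \leq O(t \cdot \indom(T))$.

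The bound in (ii) depends only on $t$ and the global parameter $\indom(T)$, not on $v$. So a single value $s = C \cdot t \cdot \indom(T)$ works for all vertices simultaneously, where $C$ is the absolute constant hidden in the lemma. Concretely, tracing the argument of (i) in reverse gives $s = |X| + 5t\,|X|$ with $X$ a minimum absorbing set. This choice of $s$ is exactly what the definition of an $s$-bounded tournament requires.

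For completeness, I would spell out the reversed argument behind (ii), since the paper only says it is analogous. Fix a minimum absorbing set $X$ and a vertex $u$. Any $v \in N^+(u)\setminus X$ has an out-neighbor $y \in X$. If $yu \in A(T)$, then $u \to v \to y \to u$ is a directed triangle, so $v \in \Delta(yu)$. If $uy \in A(T)$, then $u \to v \to y$ is a two-arc path from $u$ to $y$, so $v \in \Phi(uy)$. Hence $N^+(u)$ is covered by $X$ together with, for each $y \in X$, a set $\Delta(\cdot)$ or $\Phi(\cdot)$ attached to an arc between $u$ and $y$. By the $t$-arc-bounded hypothesis and Claim \ref{clm:forwardNeighborhood}, each of these sets has dichromatic number at most $5t$. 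This yields at most $|X|(1+5t)$ colors.

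There is no real obstacle here; the content lies entirely in Lemma \ref{lem:arcToVertex}, and the only thing to check is the orientation bookkeeping in the case split above. Combining the corollary with Theorem \ref{thm:quasilinear} applied to absorbing sets, and Lemma 59 of \cite{bourneuf2025dense} giving $\chiv_f(T) \le 20t$, then yields the improved bound.
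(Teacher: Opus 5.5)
Your proposal is correct and is the paper's argument: the corollary is part (ii) of Lemma~\ref{lem:arcToVertex} read against the definition of $s$-bounded, and your case split ($v\in\Delta(yu)$ if $yu\in A(T)$, $v\in\Phi(uy)$ if $uy\in A(T)$, both just saying $v\in N^+(u)\cap N^-(y)$) correctly mirrors the paper's proof of (i), giving $|X|(1+5t)$ colors. One caveat applies equally to you and the paper: Claim~\ref{clm:forwardNeighborhood} is proved via a shortest path from $y$ back to $u$, so the argument implicitly needs $u$ and $y$ in the same strong component (e.g.\ $T$ strongly connected), which is harmless for the dichromatic-number applications since one may restrict to strong components.
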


Now we can again apply tools from \cite{klingelhoefer2024coloring} to obtain the following improved upper bound.

\begin{theorem}\label{thm:arcQuad}
Every $t$-arc-bounded tournament $T$ satisfies $\chiv(T) \leq O(t^2 \log{t})$.
\end{theorem}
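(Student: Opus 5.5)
The plan is to show that every $t$-arc-bounded tournament $T$ has bounded \emph{fractional} dichromatic number, namely $\chiv_f(T) \le O(t)$. Then Theorem~\ref{thm:quasilinear}, applied in both directions, bounds $\indom(T)$ and $\outdom(T)$ by $O(t\log t)$. Finally, the local-to-global observation converts bounded domination into bounded dichromatic number, paying one factor of $t$.

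\textbf{Step 1: fractional bound.} Lemma 59 of \cite{bourneuf2025dense}, quoted above, gives $\chiv_f(T) \le 20t$ for every $t$-arc-bounded tournament, so I take this as given. Theorem~\ref{thm:quasilinear} then yields $\outdom(T) \le \lceil 40t\ln(40t)\rceil = O(t\log t)$. Reversing all arcs preserves $t$-arc-boundedness and the fractional dichromatic number, so the same argument gives $\indom(T) = O(t\log t)$.

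\textbf{Step 2: colour directly from a dominating set.} Composing the Corollary with Observation~\ref{obs:fractBounded} and Theorem~\ref{thm:quasilinear} again would lose a factor $t\log t$ and only reproduce the cubic bound, so I avoid that route. Instead I use the decomposition from the proof of Lemma~\ref{lem:arcToVertex}. Let $X$ be a minimum dominating set, so $|X| = O(t\log t)$. Every vertex $w \notin X$ has an in-neighbour $x \in X$, so
\[
V(T) \setminus X \subseteq \bigcup_{x\in X} N^+(x).
\]
It therefore suffices to show $\chiv(T[N^+(x)]) = O(t)$ for each $x$. That would give $\chiv(T) \le |X| + |X|\cdot O(t) = O(t^2\log t)$.

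\textbf{Step 3, the main obstacle: $\chiv(T[N^+(x)]) = O(t)$.} This is the step I expect to be hardest. Lemma~\ref{lem:arcToVertex}(ii) only gives $O(t\cdot\indom(T))$, which again loses a $\log$ factor. My first attempt is the tool behind Claim~\ref{clm:forwardNeighborhood}, namely Lemma 2.6 of \cite{klingelhoefer2024coloring}: if a set $W$ together with a short path $P$ is such that every vertex of $W$ lies in $\Delta(e)$ for some $e\in A(P)$, then $T[W\cup V(P)]$ is $5t$-colourable.

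For $W = N^+(x)$, take $u \in N^+(x)$ and any vertex $w \in N^+(x)$. If $wu \in A(T)$, then $w \in \Phi(xu)$. If $uw \in A(T)$, the relation to $x$ is less direct. So I would split $N^+(x)$ along a vertex $u$ chosen to be a sink or source of a maximal transitive piece. Alternatively, I would apply the shortest-path argument inside $T[N^+(x)\cup\{x\}]$ after adding a back path from $N^+(x)$ to $x$. The aim is that every vertex of $N^+(x)$ lies in $\Delta(e)$ or $\Phi(e)$ for some arc $e$ on a path of bounded structure, so that Claim~\ref{clm:forwardNeighborhood} yields an $O(t)$ colouring.

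If this cannot be made to work with constantly many arcs, the fallback is a multiplicative-free argument. I would cover $N^+(x)$ by sets $\Delta(e)$ and $\Phi(e)$ along a single shortest path, exactly as in the proof of Claim~\ref{clm:forwardNeighborhood}, since that lemma already handles arbitrarily long shortest paths with only $5t$ colours.
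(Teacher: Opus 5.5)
Step~1 is fine, but Step~3 is a genuine gap, and it cannot be filled: the target $\chiv(T[N^+(x)]) = O(t)$ for $x$ in a minimum dominating set is stronger than the theorem you are proving. Take any $t$-arc-bounded tournament $T'$ and add a new vertex $x$ beating all of $T'$. The result is still $t$-arc-bounded. Indeed, $\Delta(xw) = N^+(w)\cap N^-(x) = \emptyset$. For an arc $uv$ of $T'$ we have $x\notin N^+(v)$, so $\Delta(uv)$ is unchanged. Since $x$ has no in-neighbour, it lies in every dominating set, and $N^+(x) = V(T')$. So Step~3 would give $\chiv(T') = O(t)$ for \emph{every} $t$-arc-bounded tournament. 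Nothing in the paper supplies such a linear bound, and if you had one, Steps~1--2 would be unnecessary. The same example kills your fallback. The covering in Claim~\ref{clm:forwardNeighborhood} works because each $w\in\Phi(uv)$ beats the first vertex $v$ of the path and is beaten by its last vertex $u$. For $N^+(x)$ there is no common starting vertex beaten by all of $N^+(x)$. In the example, no path even ends at $x$.

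The idea you are missing is that you should not cover $V(T)$ by $|X|$ out-neighbourhoods at all. For a single vertex $v$, every other vertex is an in- or out-neighbour of $v$, so
\[
\chiv(T) \le 1 + \chiv(T[N^+(v)]) + \chiv(T[N^-(v)]) \le 1 + O(t\cdot\indom(T)) + O(t\cdot\outdom(T))
\]
by parts (ii) and (i) of Lemma~\ref{lem:arcToVertex}. Your Step~1 gives $\outdom(T),\indom(T) = O(t\log t)$, hence $\chiv(T) = O(t^2\log t)$. This is the paper's proof. It picks vertices $u,v$ with $\chiv(T[N^+(u)])$ and $\chiv(T[N^-(v)])$ both $O(t^2\log t)$, noting via Lemma~\ref{lem:arcToVertex} that any choice works, and combines them via Lemma~2.6 of \cite{klingelhoefer2024coloring}. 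With $u=v$, that combination is just the partition above. So the bound $O(t\cdot\indom(T))$ you dismissed in Step~3 is exactly what is needed. It only looked lossy because your Step~2 multiplies it by another factor $|X|$. In the paper's argument the domination number is paid once, inside each of just two neighbourhood bounds.
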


\begin{proof}
  It suffices to find a vertex $u$ and $v$ such that $\chiv(T[N^+(u)]) \leq O(t^2 \log{t})$ and $\chiv(T[N^-(v)]) \leq O(t^2 \log{t})$.  By Lemma \ref{lem:arcToVertex} and Theorem \ref{thm:quasilinear}, we can choose any pair of vertices.
The theorem then follows from Lemma 2.6 in \cite{klingelhoefer2024coloring}.
  \end{proof}

Finally, we mention a conjecture from \cite{NSS24}, which implies that the bound in Theorem \ref{thm:tBounded} should be linear.

\begin{conjecture}
For all integers $c \geq 1$ and every tournament $T$ with $\chiv(T) \geq 2c$, there exists $v \in V(T)$ such that $\chiv(T[N^+(v)]) \geq c$.  
  \end{conjecture}

\section{AI Disclosure}\label{sec:aiusage}
When the Astra model was first released, we asked it to find a
polynomial bound for the function $h$ from Theorem \ref{thm:main}, but
the model was not able to improve the known exponential bound from
\cite{bourneuf2025dense}.  Then, recently, we read the blog post
\cite{prasanna2026blog}, which presents a short proof of the existence
of a Condorcet Winning Set of size five, simplifying the proofs in
\cite{conf/soda/SongNL26,
  charikar2026expositioncandidatessufficemajority}.  This book proof
crucially uses Brouwer's fixed-point theorem in a particularly elegant
and natural way that seems tailored to domination problems.  So we
uploaded this proof as a source and asked the model to prove a
polynomial bound for the function $h$ in Theorem \ref{thm:main} using
fixed points.  The model then produced the proof of Theorem
\ref{thm:quasilinear}, which uses Brouwer's fixed-point theorem in a
way very similar to its application in \cite{prasanna2026blog}.

\bibliographystyle{alpha}
\bibliography{fracDom}
\end{document}